\font\de=cmssi12
\numberwithin{equation}{section}
\newcommand{\diff}{\operatorname{Diff}}
\newcommand{\supp}{\operatorname{supp}}
\newcommand{\s} {\sigma}
\def \cW {{\mathcal W}}
\newcommand \W {\cW}
\newtheorem{theorem}{Theorem}
\newtheorem{remark}{Remark}
\newtheorem{question}{Question}
\newtheorem{corollary}{Corollary}
\newtheorem{lemma}{Lemma}
\newtheorem{proposition}{Proposition}
\begin{document}

\thanks{ }

\author{F. Rodriguez Hertz}
\address{IMERL-Facultad de Ingenier\'\i a\\ Universidad de la
Rep\'ublica\\ CC 30 Montevideo, Uruguay.}
\email{frhertz@fing.edu.uy}\urladdr{http://www.fing.edu.uy/$\sim$frhertz}

\author{M. A. Rodriguez Hertz}
\address{IMERL-Facultad de Ingenier\'\i a\\ Universidad de la
Rep\'ublica\\ CC 30 Montevideo, Uruguay.}
\email{jana@fing.edu.uy}\urladdr{http://www.fing.edu.uy/$\sim$jana}

\author{A. Tahzibi}
\address{Departamento de Matem\'atica,
  ICMC-USP S\~{a}o Carlos, Caixa Postal 668, 13560-970 S\~{a}o
  Carlos-SP, Brazil.}
\email{tahzibi@icmc.sc.usp.br}\urladdr{http://www.icmc.sc.usp.br/$\sim$tahzibi}

\author{R. Ures}
\address{IMERL-Facultad de Ingenier\'\i a\\ Universidad de la
Rep\'ublica\\ CC 30 Montevideo, Uruguay.} \email{ures@fing.edu.uy}
\urladdr{http://www.fing.edu.uy/$\sim$ures}

\thanks{A. Tahzibi was supported by CNPq (post-doctoral  fellowship at Facultad de Ingenier\'\i a of Universidad de la Rep\'{u}blica del Uruguay) and Fapesp.}

\keywords{}

\subjclass{Primary: 37D25. Secondary: 37D30, 37D35.}

\renewcommand{\subjclassname}{\textup{2000} Mathematics Subject Classification}

%\date{\today}

\setcounter{tocdepth}{2}

\title[Maximizing measures for partially hyperbolic systems]{Maximizing measures for partially hyperbolic systems with compact center leaves}

\begin{abstract} We obtain the following dichotomy for accessible partially hyperbolic diffeomorphisms of 3-dimensional manifolds having compact center leaves: either there is a unique entropy maximizing measure, this measure has the Bernoulli property and its center Lyapunov exponent is 0 or, there is a finite number of entropy maximizing measures, all of them with nonzero center Lyapunov exponent (at least one with negative exponent and one with positive exponent), that are finite extensions of a Bernoulli system. In the first case of the dichotomy we obtain that the system is topologically conjugated to a rotation extension of a hyperbolic system. This implies that the second case of the dichotomy holds for an open and dense set of diffeomorphisms in the hypothesis of our result. As a consequence we obtain an open set of topologically mixing diffeomorphisms having more than one entropy maximizing measure.

%The results can be extended to greater dimensions under some natural additional hypothesis.

\end{abstract}

\maketitle

\section{Introduction}
Topological and metric entropy are among the most important invariants for measuring  the degree of unpredictability in a dynamical system  on an exponential scale. By variational principle for homeomorphisms defined on compact spaces, the topological entropy is the supremum over the metric entropies of all invariant probability measures. The study of invariant measures that attain the highest entropy is a natural way of describing the behavior of most of the relevant orbits of a system.
%The quest of existence and uniqueness of these entropy maximizing measures can be considered an analytical question in smooth ergodic theory.

Uniformly hyperbolic dynamical systems admit entropy maximizing measures and topological transitivity implies the uniqueness of such measures. The construction of maximizing measures for hyperbolic systems is due to R. Bowen \cite{bowen} and G. Margulis \cite {margulis} in two different approaches. By means of their proofs it comes out that maximizing measures equidistribute periodic orbits of the dynamics and also have a local product structure.

 Beyond uniform hyperbolicity even the existence problem in general is not clear. On the one hand,  M. Misiurewicz \cite{misi} has constructed a class of examples of diffeomorphisms without any entropy maximizing measure. On the other hand, the regularity of the dynamics is  relevant to the regularity of function which attributes the metric entropy to each invariant measure and by a result of S. Newhouse \cite{newhouse} this is an upper semicontinuous function in the case of $C^{\infty}$ diffeomorphisms of compact manifolds. Hence, any $C^{\infty}$ diffeomorphism admit entropy maximizing measures. Howewer, if we are in the setting of  partially hyperbolic diffeomorphism,  W. Cowieson and L.-S. Young results in \cite{cowiyoung} (see also \cite{diaz-fisher}) imply that there are always entropy maximizing measures if the center bundle is one-dimensional even if the diffeomorphism is $C^1$.

In this paper we address the problem of existence and uniqueness (finiteness) of entropy maximizing measures in the partially hyperbolic context. Although, as we have already mentioned above, it is known, we give a proof of the existence  because it is almost straightforward in our setting.

Let us mention some previous related works. In \cite{newhouse-young} S. Newhouse and L. S. Young have shown that some partially hyperbolic examples of $\mathbb{T}^4$  have a unique entropy maximizing measure (see for instance the examples in \cite{shub}). In 3-dimensional manifolds,  J. Buzzi, T. Fisher, M. Sambarino and C. V\'{a}squez \cite{bfsv} proved that certain DA construction due to R. Ma\~{n}\'{e} (see \cite{mane}) also has this uniqueness property. Moreover, thank to new techniques recently developed by M. Brin, D. Burago and Ivanov \cite{brin-burago-ivanov} and A. Hammerlindl \cite{hammerlindl1} in the theory of partially hyperbolic dynamics,  R. Ures \cite{ures} announced a proof of the same property for any (absolutely) partially hyperbolic diffeomorphism of the 3-torus homotopic to a hyperbolic automorphism.

\begin{theorem} \label{dichotomy}
Let $f : M \rightarrow M$ be a partially hyperbolic diffeomorphism, dynamically coherent with compact one dimensional central leaves and satisfying accessibility property. Then one and only one of the following occurs
\begin{enumerate}
\item $f$ admits a unique entropy maximizing measure $\mu$ and $\lambda_c(\mu) = 0$. Moreover $(f, \mu)$ is isomorphic to a Bernoulli shift,

\item  $f$ has a finite number (strictly greater than one) of ergodic maximizing measure all of which with non vanishing central Lyapunov exponent. Moreover $(f, \mu)$ is a finite extension of Bernoulli shift for any entropy maximizing measure $\mu.$ \end{enumerate}
Moreover the diffeomorphisms fulfilling the conditions of the second item form a $C^1-$open and $C^{\infty}-$dense subset of the dynamically coherent partially hyperbolic diffeomorphism with compact one dimensional central leaves.
\end{theorem}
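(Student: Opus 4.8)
The plan is to deduce the last assertion from the dichotomy together with the structural description already obtained for its first alternative, namely that alternative (1) forces $f$ to be topologically conjugate to a rotation extension $R(x,\theta)=(Ax,\theta+\rho(x))$ of a hyperbolic (Anosov) base map $A$. Since the two alternatives are mutually exclusive, it suffices to prove two things: that alternative (2) is $C^1$-open and that it is $C^\infty$-dense in the class $\cC$ of dynamically coherent partially hyperbolic diffeomorphisms with compact one-dimensional center leaves satisfying accessibility. The common engine is a robust, topological-conjugacy-invariant obstruction to being a rotation extension. Call a periodic center leaf $\gamma$ (of period $n$) \emph{hyperbolic} if the return circle diffeomorphism $f^n|_\gamma$ has a topologically attracting or repelling periodic point, equivalently if the orbit of $\gamma$ carries a periodic point with $\lambda_c\neq 0$ (the return derivative at such a point is $e^{n\lambda_c}\neq 1$). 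If $g\in\cC$ possesses a hyperbolic periodic center leaf, then $g$ cannot be conjugate to a rotation extension: the conjugacy provided by alternative (1) covers the conjugacy of the quotient Anosov dynamics and therefore maps center leaves to fibers, so it would carry a periodic center leaf of $g$ to a periodic fiber of $R$, on which the return map is the rigid rotation $\theta\mapsto\theta+\sum_{i}\rho(A^ix_0)$ and has no topologically hyperbolic periodic points. By the dichotomy, such a $g$ lies in alternative (2).

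For $C^1$-openness, start with $f$ in alternative (2). An ergodic maximizing measure then has $\lambda_c\neq 0$; using the domination of the one-dimensional center bundle together with a Pliss-time and Liao-type shadowing argument, one produces a periodic orbit $\cO$ with nonzero center exponent. Because the splitting is $E^s\oplus E^c\oplus E^u$ and $E^c$ is now hyperbolic along $\cO$, the orbit $\cO$ is a hyperbolic periodic orbit of $f$ in the usual sense, and the center leaf through it is a hyperbolic periodic center leaf. Hyperbolic periodic orbits admit a continuation $\cO_g$ with $\lambda_c(\cO_g)\neq 0$ for every $g$ that is $C^1$-close to $f$. For such $g$ partial hyperbolicity persists (it is $C^1$-open), the compact center foliation and dynamical coherence persist by normal hyperbolicity (Hirsch--Pugh--Shub), and accessibility persists since it is $C^1$-open in this setting; thus $g\in\cC$ and $g$ has a hyperbolic periodic center leaf, so $g$ lies in alternative (2). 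Hence alternative (2) is $C^1$-open.

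For $C^\infty$-density, take any $f\in\cC$; if $f$ is already in alternative (2) there is nothing to do, so assume $f$ is in alternative (1) and is therefore conjugate to a rotation extension over an Anosov base $A$. Choose a base periodic point, whose fiber is a periodic center circle $\gamma$ of some period $n$ with return map conjugate to a rigid rotation. Perform an arbitrarily small $C^\infty$ perturbation supported in a tubular neighborhood of the orbit of $\gamma$, realized as a small push along the center direction, so that $g^n|_\gamma$ becomes a Morse--Smale circle diffeomorphism with one attracting and one repelling fixed point; this creates a hyperbolic periodic center leaf. For the perturbation small enough, $g$ is still partially hyperbolic, still dynamically coherent with compact center leaves (by normal hyperbolicity the center foliation and the compactness of its leaves are unaffected), and still accessible (accessibility being $C^1$-open and $f$ accessible). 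Thus $g\in\cC$ carries a hyperbolic periodic center leaf, so $g$ lies in alternative (2). Since $f$ was arbitrary, alternative (2) is $C^\infty$-dense, completing the proof.

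The principal obstacle is the first half of the openness step: upgrading a maximizing measure with nonzero center exponent into a genuine hyperbolic periodic orbit in the merely $C^1$ category, where Katok's theory is unavailable. Here the one-dimensionality and domination of $E^c$ are essential, as they let Pliss times and Liao-type shadowing substitute for the missing regularity. The remaining care is bookkeeping: checking that each perturbation stays inside $\cC$, for which the robustness of the compact center foliation (normal hyperbolicity) and the $C^1$-openness of accessibility are exactly what is needed so that the dichotomy can be reapplied to the perturbed system.
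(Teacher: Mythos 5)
Your proposal does not prove the stated theorem: it establishes only the final sentence (that alternative (2) is $C^1$-open and $C^\infty$-dense), while explicitly assuming both the dichotomy itself and the rigidity statement that alternative (1) forces topological conjugacy to a rotation extension over an Anosov base. In the paper those assumed ingredients are the heart of the proof: the equality $h_{top}(f)=h_{top}(f^\ast)$ and the local product structure of the projected maximizing measure (Section~\ref{existence}); in the case $\lambda_c(\mu)=0$, the Avila--Viana invariance principle producing a continuous, $s$- and $u$-invariant disintegration, accessibility forcing the conditionals to be atom-free with full support, the construction of an $\mathbb{S}^1$-action commuting with $f$ and its conjugation to an isometric extension (Proposition~\ref{conjugacy}), uniqueness of the maximizing measure, and Bernoullicity via weak mixing of compact group extensions plus Rudolph's theorem (Section~\ref{rigidity}); and in the nonzero-exponent case, Ruelle--Wilkinson atomicity of the center conditionals ($k$ atoms of weight $1/k$), the construction of the companion measure $\mu^-$ from $\mu^+$ using endpoints of Pesin center arcs, finiteness via the weak$^\ast$-limit argument with the uniform gap $c_0$, and the finite-extension-of-Bernoulli property (Section~\ref{parte2}). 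None of this is supplied or sketched, so as a proof of the full statement the proposal has a major gap: the dichotomy cannot be deduced from the dichotomy.

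For the one assertion you do address, your argument essentially coincides with the paper's concluding subsection: accessibility is $C^1$-open and dense for one-dimensional center, dynamical coherence is open by Hirsch--Pugh--Shub (plaque expansiveness holds because the quotient is conjugate to a hyperbolic automorphism), case (1) excludes hyperbolic periodic points, and diffeomorphisms with hyperbolic periodic points --- which can be created by small perturbations on the abundant periodic center circles --- form an open and dense set. You add two refinements. First, you spell out why a rotation extension has no topologically hyperbolic periodic point on a fiber (the conjugacy carries center leaves to fibers); this is a useful explicitation of the paper's terse claim. Second, to get openness of case (2) itself rather than of the a priori smaller set of systems possessing a hyperbolic periodic point, you propose extracting a hyperbolic periodic orbit from a maximizing measure with $\lambda_c\neq 0$ via Pliss times and Liao-type shadowing; this step is invoked rather than proved, and is arguably unnecessary, since the paper's second-case analysis already requires $f\in C^{1+\alpha}$ (for Ruelle--Wilkinson), so that Katok's closing lemma applies directly to the hyperbolic maximizing measure. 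Even granting that step in full, however, the proposal establishes only the genericity clause of Theorem~\ref{dichotomy}, not the theorem.
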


In the proof of this theorem, we consider partially hyperbolic diffeomorphisms with compact central leaves as a non linear cocycle over a uniformly hyperbolic homeomorphism and apply an invariance principle proved by A. Avila and M. Viana \cite{AV}.

Let us mention that in the first item of the above theorem, based in ideas similar to the ones of A. Avila, M. Viana and A. Wilkinson in \cite{AVW}, the dynamics will be shown to be conjugate to an isometric extension of a hyperbolic homeomorphism  implying that the subset of dynamics satisfying the first item  is meager.  Observe that, as a corollary we obtain that for an open and dense subset of partially hyperbolic diffeomorphisms satisfying the conditions of the above theorem the ergodic maximizing measures are hyperbolic, i.e. the Lyapunov exponents are non-vanishing.

In \cite{newhouse-young} S. Newhouse and L. S. Young introduced the  definition of almost conjugacy. Let us recall that a system is intrinsically ergodic if it has a unique entropy maximizing measure (see \cite{weiss}). Let $f:X\rightarrow X$ and $g:Y\rightarrow Y$  be intrinsically ergodic systems (that is having a unique entropy maximizing measure) with maximal measures $\mu$ and $\overline\mu$, respectively. Say that $f$ and $g$ are almost conjugated if there are invariant sets $A\subset X$ and  $B\subset Y$ such that $\mu(A)=\overline\mu (B)=1$ and $f|_{A}$ is topologically conjugated to $g|_{B}$. They formulated the following question:
let $\mathcal{B}(M)$ denote the set of $C^r$ diffeomorphisms $f$ such that
\begin{enumerate}
\item $f $ has only finitely many ergodic measures of maximal entropy.
\item On each support of an ergodic measure of maximal entropy, $f$ is almost conjugate to the restriction of an Axiom A diffeomorphism to a topologically transitive basic set.
\end{enumerate}
Is $\mathcal{B}(M)$ residual in $\diff^r(M)$?

 Observe that a positive answer to Newhouse-Young question would imply that generically the support of entropy maximizing measures will not coincide.

Using Theorem \ref{dichotomy} and the robustly (topologically) mixing diffeomorphisms obtained by C. Bonatti and L. J. D\'{\i}az \cite{bonatti-diaz} we get examples of robustly mixing systems with more than one maximizing measures. This contrasts with the uniformly hyperbolic case and gives a negative answer to a question of J. Buzzi
 and T. Fisher \cite{bfsv}.

\begin{theorem}\label{mixing}
 There exist robustly mixing diffeomorphisms where the number of entropy maximizing measures is larger than one.
\end{theorem}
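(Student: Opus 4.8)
The plan is to exhibit the desired examples inside the family of robustly transitive, non-Anosov diffeomorphisms produced by Bonatti and D\'iaz \cite{bonatti-diaz}, and then to invoke the dichotomy of Theorem \ref{dichotomy}. Recall that their construction begins with a transitive Anosov diffeomorphism on a manifold $N$, passes to the product $N\times S^1$, and performs a $C^1$-local deformation near a periodic orbit (creating a blender) to obtain a $C^1$-open set $\cU$ of diffeomorphisms of $N\times S^1$ that are partially hyperbolic with one-dimensional center and robustly topologically mixing. First I would verify that every $g\in\cU$ satisfies the structural hypotheses of Theorem \ref{dichotomy}: the center bundle integrates to a center foliation whose leaves are circles (a small perturbation of the fiber foliation of $N\times S^1\to N$), so that $g$ is dynamically coherent with compact one-dimensional center leaves.

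The only hypothesis of Theorem \ref{dichotomy} not automatically furnished by the construction is accessibility. I would handle this by recalling that stable accessibility is $C^1$-open and $C^1$-dense in this setting; hence, after a $C^1$-small perturbation inside $\cU$ (shrinking $\cU$ if necessary), I may assume that every element of $\cU$ is accessible, and these diffeomorphisms remain robustly mixing because robust mixing is itself a $C^1$-open condition and the perturbation is $C^1$-small. Thus $\cU$ becomes a nonempty $C^1$-open set of accessible, dynamically coherent partially hyperbolic diffeomorphisms with compact one-dimensional center leaves, each of which is robustly mixing.

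Next I would combine openness and density. Since the $C^\infty$ topology is finer than the $C^1$ topology, the $C^1$-open set $\cU$ is also $C^\infty$-open. By the last assertion of Theorem \ref{dichotomy}, the diffeomorphisms in case (2)---those with finitely many, and strictly more than one, ergodic maximizing measures---form a $C^\infty$-dense subset of the class under consideration. Therefore this dense subset meets $\cU$, producing a diffeomorphism $f\in\cU$ lying in case (2). By construction $f$ is robustly mixing, and by case (2) of Theorem \ref{dichotomy} it carries more than one entropy maximizing measure, which is exactly the assertion of Theorem \ref{mixing}. In fact, intersecting $\cU$ with the $C^1$-open case-(2) locus yields a whole $C^1$-open family of such examples.

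The step I expect to require the most care is verifying that the Bonatti--D\'iaz examples genuinely fall under all the hypotheses of Theorem \ref{dichotomy} \emph{simultaneously}---in particular that the center foliation is by compact leaves and that accessibility can be arranged without destroying robust mixing. The product structure $N\times S^1$ makes the compactness of the center leaves transparent, and robustness of topological mixing is built into the construction, so the essential point is to reconcile the genericity of accessibility with the open condition of robust mixing; this is immediate once both are recognized as $C^1$-robust or $C^1$-generic properties.
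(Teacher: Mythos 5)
Your proposal is correct and follows essentially the same route as the paper: take the Bonatti--D\'{\i}az robustly mixing open set near $A\times \mathrm{id}_{\mathbb{S}^1}$, intersect it with the open and dense set of accessible diffeomorphisms, and invoke the final assertion of Theorem \ref{dichotomy} that the second alternative holds on a $C^1$-open and $C^\infty$-dense subset. The extra care you take in checking dynamical coherence and compactness of center leaves via Hirsch--Pugh--Shub leaf conjugacy is exactly what the paper disposes of in its ``simple example'' discussion in Section \ref{quotient}, so nothing is missing.
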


  Here a natural question arise:

  \begin{question}

  Is the support of any of the measures given by  Theorem \ref{mixing} the whole manifold?
  \end{question}

  A positive answer to this question would imply the existence of an open set of diffeomorphisms having entropy maximizing measures with full support giving a negative answer to Newhouse-Young question. Although this is the most likely situation,  we obtain that in our setting the answer will be positive if we modify slightly the definition of almost conjugacy.
 \begin{corollary} \label{answer-ny}

 Let $\mathcal{C}$ be the set of partially hyperbolic diffeomorphism of 3-dimensional manifolds that are dynamically coherent with compact one dimensional central leaves. Then, there exists an open and dense subset $\tilde{\mathcal{C}}$  of $\mathcal{C}$ such that for $f\in\tilde{\mathcal{C}}$ we have
 \begin{enumerate}
 \item $f$ has a finite number of entropy maximizing measures.
 \item If $\mu$ is an entropy maximizing measure for $f$, there exist a transitive hyperbolic basic set of an Axiom A diffeomorphism $g$ with entropy maximizing measure $m$, a subset $A\subset \supp (\mu)$  with $\mu(A)=1$ and a subset $B\subset \supp(m)$ with $m(B)=1$ such that $f|_A$ is topologically conjugated to $g|_B$.
 \end{enumerate}
 \end{corollary}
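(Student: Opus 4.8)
The plan is to take $\tilde{\mathcal{C}}$ to be precisely the $C^1$-open and $C^\infty$-dense subset of $\mathcal{C}$ consisting of those $f$ that satisfy item (2) of Theorem~\ref{dichotomy}; its openness and density are exactly the last assertion of that theorem. For such $f$, item (1) of the corollary is immediate, since Theorem~\ref{dichotomy}(2) furnishes only finitely many ergodic maximizing measures. All the work goes into producing, for each ergodic maximizing measure $\mu$, the topological conjugacy of item (2) between $f$ restricted to a full-$\mu$-measure subset of $\supp(\mu)$ and an Axiom A diffeomorphism restricted to a full-measure subset of a transitive basic set.

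First I would pass to the hyperbolic quotient. Since $f$ is dynamically coherent with compact one-dimensional center leaves, the center foliation is a circle bundle and the quotient $\pi\colon M\to N:=M/\mathcal F^c$ conjugates $f$ to the underlying uniformly hyperbolic homeomorphism $f_0\colon N\to N$, i.e. $\pi\circ f=f_0\circ\pi$. Because the one-dimensional center carries no entropy, $\htop(f)=\htop(f_0)$ and $\mu$ pushes forward to an entropy maximizing measure $\mu_0=\pi_*\mu$ of $f_0$, namely its Bowen--Margulis measure, supported on a transitive basic set $\Lambda_0$. By Bowen's construction of Markov partitions \cite{bowen}, $f_0|_{\Lambda_0}$ is topologically conjugate, off a set of $\mu_0$-measure zero (the partition boundaries), to a transitive subshift of finite type $(\Sigma,\sigma)$, under which $\mu_0$ corresponds to the Parry (maximal) measure.

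Next I would lift this symbolic picture through $\pi$. By Theorem~\ref{dichotomy}(2) the system $(f,\mu)$ is a finite extension of the Bernoulli base $(f_0,\mu_0)$; using the hyperbolic structure of the base one upgrades this to a topological statement: over a full-$\mu_0$-measure invariant subset of $\Lambda_0$, the support of $\mu$ meets each fiber in finitely many points that are permuted by $f$ according to a locally constant finite-group cocycle over $(\Sigma,\sigma)$. Such a finite group extension of a transitive subshift of finite type is again (conjugate to) a transitive subshift of finite type $(\Sigma',\sigma')$, and every transitive subshift of finite type is realized, via a horseshoe-type construction, as a transitive basic set of some Axiom A diffeomorphism $g$, its maximizing measure $m$ corresponding to the Parry measure. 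Composing the coding of $\Sigma'$ with the conjugacy $\Sigma'\cong g|_{\supp(m)}$ and with the lift to $f$ of the base conjugacy $f_0|_{\Lambda_0}\cong(\Sigma,\sigma)$, and discarding the measure-zero coding ambiguities, yields a full-$\mu$-measure set $A\subset\supp(\mu)$ and a full-$m$-measure set $B\subset\supp(m)$ with $f|_A$ topologically conjugate to $g|_B$, as required. The case $\lambda_c(\mu)<0$ is handled symmetrically by working with $f^{-1}$.

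The main obstacle is the third step: converting the measure-theoretic assertion ``finite extension of a Bernoulli shift'' of Theorem~\ref{dichotomy}(2) into a bona fide topological conjugacy on full-measure sets. Because the center exponent is non-vanishing only $\mu$-almost everywhere, one cannot simply assert that $\supp(\mu)$ is a uniformly hyperbolic (hence Axiom A) set for $f$; instead the finiteness and the permutation of fiber points must be read off continuously in the symbolic coordinates coming from the base Markov partition, so that the extension is genuinely a locally constant finite-group cocycle and the resulting conjugacy is continuous on an invariant set of full measure. Verifying that this cocycle is well defined and locally constant, and that the associated finite-to-one extension is a transitive subshift of finite type, is the delicate heart of the argument.
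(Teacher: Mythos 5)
Your global strategy coincides with the paper's at the outset: take $\tilde{\mathcal{C}}$ to be the diffeomorphisms falling in the second case of Theorem \ref{dichotomy} (open and dense by that theorem), so that finiteness of maximizing measures is immediate, and construct the almost conjugacy through the projection $\pi$ to the hyperbolic quotient. But from there the paper's proof is far more direct than yours: it invokes Lemma \ref{katomos}, by which $\mu$-almost every center fiber carries exactly $k$ atoms of mass $1/k$, deletes the null set of exceptional fibers, and asserts that $\pi$ (composed with the conjugacy of $f^\ast$ to a linear automorphism of $\mathbb{T}^2$ given by Theorem \ref{Misseifert}) conjugates $f$ on the remaining part of $\supp(\mu)$ with a linear automorphism of the torus times a finite permutation; this product is the Axiom A model $g$ and its transitive basic set. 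No Markov partitions, no symbolic coding, and no realization of subshifts of finite type as basic sets enter the paper's argument. Note also two inaccuracies in your setup: the quotient $f^\ast$ here is conjugate to a hyperbolic automorphism of $\mathbb{T}^2$, so its maximizing measure is carried by the whole torus, not by a proper basic set; and since $f$ preserves the orientation of the circle fibers, the induced permutation of the $k$ atoms on each fiber preserves their cyclic order, so your cocycle would take values in $\mathbb{Z}/k\mathbb{Z}$ rather than in a general finite group --- a simplification your write-up misses.

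The genuine gap is your third step, and you name it yourself: the claim that ``using the hyperbolic structure of the base one upgrades this to a topological statement'' so that the extension becomes a \emph{locally constant} finite-group cocycle over the coding. Nothing in your argument (or in the paper) supports local constancy, or even continuity, of the fiber data in symbolic coordinates. The $k$ atoms are produced by the Ruelle--Wilkinson theorem (Lemma \ref{katomos}) and, in the construction of Section \ref{parte2}, by Pesin center-unstable manifolds; these depend only \emph{measurably} on the fiber. Pesin theory yields continuity only on Pesin blocks, which are non-invariant compact sets of measure strictly less than one, and a countable union of such blocks does not produce continuity on an invariant full-measure set; so the local constancy of the permutation cocycle is not a deferrable ``verification'' --- it is the entire content of the step, it is unproved, and no mechanism for proving it is indicated. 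The paper sidesteps this issue by working fiberwise rather than symbolically: the Newhouse--Young notion of almost conjugacy used in Corollary \ref{answer-ny} does not require the conjugating homeomorphism to transport $\mu$ to $m$, and the paper's one-line proof exploits exactly this, discarding a null set of fibers and letting $\pi$ together with a labelling of the $k$ atoms induce the conjugacy with the product model, with no need to first trivialize the cocycle over a Markov coding. As it stands, your proposal is an honest sketch with an acknowledged hole precisely where the work lies, and the route you chose (coding plus SFT realization) is not the one the paper takes.
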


 We observe that the only difference with Newhouse-Young question is that we do not require $\mu $ to be the unique measure maximizing entropy of $f$ restricted to its support.

We emphasize that in the category of smooth invariant measures $\mu,$ by Pesin's result, if all the Lyapunov exponents are non zero and $(f^n, \mu)$ is ergodic for any $n \geq 1$ then $(f, \mu)$ is Bernoulli. Here we  see that the entropy maximizing measure with zero center Lyapunov exponent has the Bernoulli property by using profound results of D. Rudolph \cite{Rudolph} about isometric extensions of Bernoulli shifts.

The paper is organized as follows. In Section \ref{section.prelim.dinam} we present some preliminaries and definitions needed for the understanding of the paper. In Section \ref{quotient} we describe the space of center leaves and, moreover, we show that the manifold $M$ is (finitely covered by) a nilmanifold. In Section \ref{existence} we give a proof of the existence of  entropy maximizing measures and we show that any such a measure projects onto a measure with local product structure. In Section \ref{rigidity} we prove the first part of the dichotomy of Theorem \ref{dichotomy} and in Section \ref{parte2} we prove the second part. In Section \ref{section.mixing} we prove Theorem \ref{mixing} and Corollary \ref{answer-ny}. Finally, in Section \ref{questions} the reader can find some questions and directions of development.

%\begin{remark}
%The number (ergodic) of maximizing measures is lower continuous but not continuous.
%\end{remark}

\section{Dynamic preliminaries}\label{section.prelim.dinam}
%________________________________________________

\subsection{Partial Hyperbolicity} Throughout this paper we shall work with a {\de
partially hyperbolic diffeomorphism} $f$, that is, a diffeomorphism
admitting a nontrivial $Tf$-invariant splitting of the tangent
bundle $TM = E^s\oplus E^c \oplus E^u$, such that all unit vectors
$v^\s\in E^\s_x$ ($\s= s, c, u$) with $x\in M$ satisfy:
$$\|T_xfv^s\| < \|T_xfv^c\| < \|T_xfv^u\| $$
for some suitable Riemannian metric. $f$ also must satisfy
that
$\|Tf|_{E^s}\| < 1$ and $\|Tf^{-1}|_{E^u}\| < 1$.  We also want to introduce a stronger type of partial hyperbolicity. We will say that $f$ is {\de absolutely partially hyperbolic}\, if it is partially hyperbolic and $$\|T_xfv^s\| < \|T_yfv^c\| < \|T_zfv^u\| $$ for all $x,y,z\in M$.\par%

 For partially hyperbolic diffeomorphisms, it is a well-known fact that there are foliations $\W^\s$ tangent to the distributions $E^\s$
for $\s=s,u$ . The leaf of $\W^\s$
containing $x$ will be called $W^\s(x)$, for $\s=s,u$. \par
\subsection{Dynamical coherence}In general it is not true that there is a foliation tangent to
$E^c$. It can fail to be true even if $\dim E^c =1$ (see \cite{example}). We shall say that $f$ is {\de dynamically coherent} if there exist invariant foliations $\mathcal{W}^{c\s}$ tangent to $E^{c\s}=E^c \oplus E^\s$  for $\s=s,u$. Note that by taking the intersection of these foliations we obtain an invariant foliation $\mathcal{W}^c$ tangent to $E^c$ that subfoliates $\mathcal{W}^{c\s}$ for $\s=s,u$. Along this paper all partially hyperbolic diffeomorphisms will be dynamically coherent.

\subsection{Accessibility} We shall say that a set $X$ is {\de $\s$-saturated} if it is a union
of leaves of the strong foliations $\W^\s$ for $\s=s$ or $u$. We also say
that $X$ is $su$-saturated if it is both $s$- and $u$-saturated. The
 accessibility class of the point $x\in M$ is the
minimal $su$-saturated set containing $x$.  In case there is some
$x\in M$ whose accessibility class is $M$, then the diffeomorphism
$f$ is said to have the {\de accessibility property}. This is
equivalent to say that any two points of $M$ can be joined by a path
which is piecewise
tangent to $E^s$ or to $E^u$. \par%

\subsection{Entropy} Let $n\in \mathbb{N}$ and $\delta > 0$. A finite subset $E$ is  $(n,\delta)$-separated if for $x,\,y \in E$, $x\neq y$, we
have $\max_{i=0,\dots, n} d(f^n(x),f^n(x))\geq\delta$. Let $$h_n(f, K, \delta)=\sup\{\#E; E\subset K\text{ is }(n,\delta)-\text{separated}\}$$ and $$h(f,K)=\lim_{\delta\rightarrow 0}\limsup_{n\rightarrow \infty}\frac1n \log h_n(f, K, \delta).$$ When $K=M$ call $h(f, M)=h_{top}(f)$ the {\de topological entropy} of $f$. It is a well-known fact that $h(f,K)=0$ if $K$ is a subset of a curve having all its iterates with uniformly bounded length.

Along the paper we shall suppose that the definition of entropy of an $f-$invariant  probability $\mu$,  $h_\mu(f)$, is known among other basic concepts of ergodic theory.

The variational principle states that $\sup\{h_\mu(f); \, \mu \, \text{is} f-\text{invariant} \}=h_{top}(f)$ if $f$ is a continuous map of a compact metric space. We will say that an {\bf ergodic} probability $\mu$ satisfying $h_\mu(f)=h_{top}(f)$ is an {\de entropy maximizing measure}. We call the reader's attention with respect to the fact that, for simplicity of the exposition, we are supposing that entropy maximizing measures are ergodic. In fact if the metric entropy of a measure equals the topological entropy, the ergodic decomposition and the variational principle imply that almost all ergodic components are entropy maximizing. Conversely, the metric entropy of a convex combination of entropy maximizing measures has maximum entropy.  In other words, the measures whose metric entropies equal the topological entropy form a convex set.

\section{The Quotient Dynamics}\label{quotient}
One of the important issues in our work is the dynamics induced in the quotient space $M^\ast:= M/\mathcal{W}^c.$ Let $\pi : M \rightarrow M^\ast$ be the quotient map, $M^\ast$ be equipped with the quotient topology and $f^\ast$ be the dynamics induced on $M^\ast.$ For a general partially hyperbolic diffeomorphism with not necessarily compact central foliation, this quotient space may be even non-Hausdorff. However, when all leaves are compact we hope that this is the case (this result was recently proved by P. Carrasco when the center bundle is one dimensional, see \cite{carrasco}). For three dimensional manifolds we know that the volume of leaves of any one-dimensional foliation $\mathcal{W}$ by compact leaves  is uniformly bounded and every leaf has an arbitrarily small saturated neighborhood \cite{epstein}. Then, the quotient is Hausdorff and, moreover, it is homeomorphic to a closed surface. In other words, $\mathcal{W}$ is a Seifert fibration without boundary.

\subsection{Simple example} Before going into the more general case, let us state a simple example of diffeomorphisms satisfying the hypothesis of our theorem. Let $$ A :=
\begin{pmatrix}
2&1\cr 1&1 \cr
\end{pmatrix}
$$ and $\phi: \mathbb{T}^2 \rightarrow S^1$. Then the Skew product $F: \mathbb{T}^2 \times S^1 \rightarrow \mathbb{T}^2 \times S^1, F(x, \theta) = (A(x), \theta + \phi(x))$ is a partially hyperbolic diffeomorphism with compact central leaves. By standard Hirsch-Pugh-Shub theory, any $C^1-$perturbation of $F$ is partially hyperbolic with compact central leaves and leaf conjugated to $F.$ In particular if $G$ is $C^1-$close to $F$ then $\mathbb{T}^3 /\mathcal{F}^c(G)$ is homeomorphic to $\mathbb{T}^2$ and the dynamics induced in the quotient is conjugate to $A.$

 We mention that by a result of A. Hammerlindl \cite{hammerlindl1} this leaf conjugacy can be obtained not only for perturbations but also for any other absolutely partially hyperbolic diffeomorphism having the same  linear part. He has also announced similar result for the 3-dimensional nilmanifolds. In particular he obtained  that any absolutely partially hyperbolic diffeomorphism of one of these manifolds, that is known to be dynamically coherent \cite{brin-burago-ivanov, parwani}, has compact center leaves.

 The only hypothesis that a priori remains unsatisfied is the accessibility property. But accessibility is an open and dense property in this context (see \cite{nitica-torok, bhhtu}).

 \subsection{Orientability of bundles} From now on we will assume that $M,\,E^u,\, E^s\, E^c$ are orientable and that $f$ preserves the orientation of these bundles. Let us show that these assumptions are not restrictive. Suppose that Theorem \ref{dichotomy} is valid under these conditions. Now, let $g:N\rightarrow N$ be a diffeomorphism satisfying the hypothesis of Theorem \ref{dichotomy}. By taking a finite covering we can obtain a diffeomorphism $\overline g:\overline N\rightarrow \overline N$ such that $\overline N$ and the corresponding invariant bundles are orientable and $h=\overline g^2$ preserves its orientations. Observe that accessibility of $g$ implies accessibility of $\overline g$ and so, accessibility of $h$. Then, $h$ satisfies one of the two possibilities  of the dichotomy given by Theorem \ref{dichotomy}.

  Suppose that $\mu$ is an entropy maximizing measure for $h$ with zero center Lyapunov exponent.  Uniqueness of the entropy maximizing measure of $h$ implies that $\mu$ is $\overline g$-invariant and, of course, it has null center Lyapunov exponent. Moreover, $\mu$ is the unique entropy maximizing measure for $\overline g$.  Thanks to the Rokhlin disintegration  $\mu$ projects onto an $g$-invariant  measure $\nu$. It is not difficult to see that $\nu$ is an entropy maximizing measure of $g$. Uniqueness is a consequence of the uniqueness of the entropy maximizing measure of $\overline g$. Indeed let $\tilde \nu$ be entropy maximizing for $g$ with null center Lyapunov exponent. Then, there exists $\tilde \mu$ a $\overline g$-invariant lift of $\tilde \nu$. On the one hand, the finiteness of the covering gives that $h_{\tilde \nu}=h_{\tilde \mu}$ and $h_{\nu}=h_{\mu}$. On the other hand, since $\tilde \nu$ and $\nu$ are entropy maximizing we have that $h_{\tilde \nu}=h_{\nu}$ leading to  $h_{\mu}=h_{\tilde\mu}$. The uniqueness of the entropy maximizing entropy of $\overline g$ implies $\tilde \mu=\mu$. Bernoullicity of $\mu$ for $h$ implies Bernoullicity of $\mu$ for $\overline g$ and since $(g,\nu)$ is a factor of $(\overline g, \mu)$ it also has the Bernoulli property. Finally if $g$ has an entropy maximizing measure with zero center exponent then, $h$ also has an entropy maximizing measure with zero center exponent. This implies uniqueness for the maximizing measure of $h$ and the previous argument gives the uniqueness for $g$.

Assume  now that $h$ has two entropy maximizing measures, $\mu^+$ and $\mu^-$ with positive and negative center exponent, respectively. Now, $(\mu^\s+\overline g_\#\mu^\s)/2$, with $\s=+,-$, are entropy maximizing measures for $\overline g$ with positive and negative center exponent, respectively. Like in the previous argument, these measures project onto entropy maximizing measures for $g$ that are different because the center exponent of one of them is positive and the other one is negative. This gives the second part of the dichotomy for $g$.

\subsection{$M$ is a nilmanifold} Let $f:M\rightarrow M$ be a partially hyperbolic diffeomorphism with $\text{dim} (M)=3$. Recall that we are assuming that $f$ is dynamically coherent and that the center manifolds are compact i.e. $W^c (x)$ is diffeomorphic to $\mathbb{S}^1$ for every $x\in M$. Moreover, as we have just showed, for our purposes it is enough to consider that $M,\,E^u,\, E^s\, E^c$ are orientable and that $f$ preserves the orientation of these bundles.

$\mathcal{W}^c$ is a foliation by circles of a closed $3$-manifold then, as we have already said,  Epstein's result \cite{epstein} implies that  $\mathcal{W}^c$ is a Seifert foliation. In our case we can obtain more information from the partially hyperbolic structure. Since $W^c(x)$  is diffeomorphic to $\mathbb{S}^1$ we have that $W^{cs}=W^s(W^c(x))$ is an immersed cylinder (observe that $\mathcal{W}^s$ is orientable, this prevents the presence of M\"{o}bius' strips) and then, the foliation $\mathcal{W}^c$ is locally trivial when restricted to $W^{cs}(x))$. Analogously, $\mathcal{W}^c$ is locally trivial when restricted to $W^{cu}(x))$. These facts and the transversality of the foliations $\mathcal{W}^{cs}$ and $\mathcal{W}^{cu}$ imply that $\mathcal{W}^c$ is locally trivial, that is, for every leaf $W^c(x)$ there exist a saturated neighborhood $V_x$ and a homeomorphism $\varphi_x:V_x\rightarrow \mathbb{D}^2\times \mathbb{S}^1$ such that $\varphi_x$ sends leaves of $\mathcal{W}^c$ to circles of the form $\{w\}\times \mathbb{S}^1$. This implies that $\mathcal{W}^c$ is a Seifert fibration (as we mentioned before this was already known for us by Epstein's result) without singular leaves. Moreover, we have the following result:

\begin{theorem}\label{Misseifert}
Let $M$ a closed orientable 3-dimensional manifold and $f:M\rightarrow M$ a partially hyperbolic diffeomorphism. Suppose that $f$ is dynamically coherent with compact center manifolds and that the bundles $E^s, \, E^u, \, E^c$ are orientable. Then,
$\mathcal{W}^c$ is a Seifert fibration without singular leaves over $\mathbb{T}^2$. Morever,\underline{} $f^\ast$ is conjugated to a hyperbolic automorphism of $\mathbb{T}^2$.
\end{theorem}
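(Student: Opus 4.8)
The goal splits into two assertions: that $\mathcal{W}^c$ fibers as a Seifert fibration without singular leaves over $\mathbb{T}^2$ (in particular, that the base surface is the torus), and that the induced map $f^\ast$ is conjugate to a hyperbolic toral automorphism. I would organize the argument around the quotient dynamics $f^\ast : M^\ast \to M^\ast$, where $M^\ast = M/\mathcal{W}^c$. From the discussion preceding the statement, we already know that $\mathcal{W}^c$ is a Seifert fibration without singular leaves, so the base $M^\ast$ is a closed orientable surface, and the projections of the foliations $\mathcal{W}^{cs}$ and $\mathcal{W}^{cu}$ descend to a pair of transverse one-dimensional foliations $\mathcal{F}^s$ and $\mathcal{F}^u$ on $M^\ast$. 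The first main step is to observe that $f^\ast$ is an Anosov-type homeomorphism of $M^\ast$: the unstable foliation $\mathcal{W}^u$ projects to $\mathcal{F}^u$ and is expanded, while $\mathcal{W}^s$ projects to $\mathcal{F}^s$ and is contracted, so $f^\ast$ is a topologically Anosov homeomorphism with two transverse invariant foliations having no compact leaves.

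\medskip

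The second step is to identify the surface $M^\ast$. A closed orientable surface carrying a pair of transverse (nonsingular) one-dimensional foliations must have zero Euler characteristic by the Poincar\'e--Hopf / index obstruction, hence $M^\ast$ is the torus $\mathbb{T}^2$. This is the step where the orientability hypotheses on the bundles are essential: orientability of $E^s$ and $E^u$ guarantees that $\mathcal{F}^s$ and $\mathcal{F}^u$ are genuinely orientable foliations (no one-prong singularities are forced), so the only surface admitting them is the one of Euler characteristic $0$. Together with the absence of singular fibers this already gives the first conclusion, that $\mathcal{W}^c$ is a Seifert fibration without singular leaves over $\mathbb{T}^2$.

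\medskip

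The third step is to upgrade ``topologically Anosov on $\mathbb{T}^2$'' to ``conjugate to a hyperbolic automorphism.'' Here I would invoke the classification of Anosov homeomorphisms of the torus: an expansive homeomorphism of $\mathbb{T}^2$ with canonical coordinates (local product structure coming from the transverse stable and unstable foliations) is topologically conjugate to a linear hyperbolic automorphism, via the theory of pseudo-Anosov/Anosov maps on surfaces (in the spirit of the Franks--Manning classification, using that the action on $\pi_1(\mathbb{T}^2) = \mathbb{Z}^2$ is realized by the linearization). Concretely, the lift of $f^\ast$ to the universal cover $\mathbb{R}^2$ is at bounded distance from a linear map $A \in \mathrm{GL}(2,\mathbb{Z})$, and expansivity plus the transverse invariant foliations force $A$ to be hyperbolic; the standard shadowing/semiconjugacy argument then promotes the semiconjugacy to a genuine conjugacy.

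\medskip

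The step I expect to be the main obstacle is the third one, establishing the conjugacy rather than merely a semiconjugacy to the linear model. The delicate point is that $f^\ast$ is only a \emph{homeomorphism}, not a priori smooth, so one cannot directly appeal to differentiable Anosov theory; one must verify expansiveness and the local product structure for $f^\ast$ purely from the transversality and contraction/expansion properties inherited from $f$, and then check that the foliations $\mathcal{F}^s, \mathcal{F}^u$ have no compact leaves and no closed ``bigon'' regions so that the semiconjugacy is injective. Verifying injectivity typically requires ruling out that two distinct points are collapsed by the semiconjugacy, which amounts to excluding wandering domains bounded by stable and unstable segments—this is where the careful use of the product structure on $M^\ast$ does the real work.
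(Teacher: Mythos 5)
Your proposal follows essentially the same route as the paper: the paper likewise observes that the preceding discussion already yields the Seifert fibration without singular leaves, projects the cylinder leaves of $\mathcal{W}^{cs}$ and $\mathcal{W}^{cu}$ to the stable and unstable ``manifolds'' of a hyperbolic homeomorphism of $M^\ast$, and then cites Franks' results to conclude simultaneously that the base is $\mathbb{T}^2$ and that $f^\ast$ is conjugate to a hyperbolic automorphism. Your Euler-characteristic identification of the base and the expansivity/local-product-structure upgrade from semiconjugacy to conjugacy are precisely the standard details behind that citation, so the two arguments coincide in substance.
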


\begin{proof}
On the one hand, the comments above imply that $\mathcal{W}^c$ is a Seifert fibration without singular leaves.  On the other hand, center stable and center unstable manifolds are cylinders  go via the quotient to lines. Partial hyperbolicity implies that this lines are the stable and unstable ``manifolds" of a hyperbolic homeomorphism. By Franks' results \cite{franks} we obtain that the base surface of the Seifert fibration is a torus and the induced dynamics is conjugated to a hyperbolic automorphism of $\mathbb{T}^2$.
\end{proof}

We observe that, due to the classification of Seifert fibrations,  an $M$ satisfying the conclusions of Theorem \ref{Misseifert} is diffeomorphic to a nilmanifold (including $\mathbb{T}^3$) or, equivalently, to a mapping torus of $\begin{pmatrix}
1&n\cr 0&1 \cr \end{pmatrix}$ with $n\in \mathbb{N}$ (see, for instance, \cite{hatcher}).

As we have mentioned yet, in case that $n\neq 0$ Hammerlindl has announced that he is able to proof that all center curves are compact if $f$ is absolutely partially hyperbolic. Moreover, in \cite{hhu-nilman} it is shown that any partially hyperbolic diffeomorphism of a 3-dimensional nilmanifold has the accessibility property. These two facts lead to the following corollary.

\begin{corollary}
Any absolutely partially hyperbolic diffeomorphism of a 3-dimensional nilmanifold satisfies the dichotomy of Theorem \ref{dichotomy}.
\end{corollary}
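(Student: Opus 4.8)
The plan is to check that an absolutely partially hyperbolic diffeomorphism $f$ of a $3$-dimensional nilmanifold $M$ automatically satisfies the three standing hypotheses of Theorem \ref{dichotomy} — dynamical coherence, compactness of the (one-dimensional) center leaves, and the accessibility property — and then to quote the theorem. No new dynamical input is needed; the whole task is to match each hypothesis with an already available result.

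First I would record two elementary points: by definition absolute partial hyperbolicity implies partial hyperbolicity, and the center bundle is automatically one-dimensional, since $\dim M = 3$ forces each summand of the nontrivial splitting $TM = E^s \oplus E^c \oplus E^u$ to be a line field. Dynamical coherence for these systems is then supplied by \cite{brin-burago-ivanov, parwani}. Compactness of the center leaves is exactly Hammerlindl's announced theorem \cite{hammerlindl1}: for a nilmanifold realized as a mapping torus of $\begin{pmatrix} 1 & n \\ 0 & 1 \end{pmatrix}$ with $n \neq 0$, every absolutely partially hyperbolic diffeomorphism has all center curves compact, hence diffeomorphic to $\mathbb{S}^1$. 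Finally, accessibility is furnished by \cite{hhu-nilman}, where any partially hyperbolic diffeomorphism of such a nilmanifold is shown to be accessible. Assembling these facts places $f$ within the hypotheses of Theorem \ref{dichotomy}, so the dichotomy follows at once.

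The step carrying all the weight — and the only genuinely substantial input — is the compactness of the center leaves, where absolute (rather than merely pointwise) partial hyperbolicity is essential and where Hammerlindl's description of the center foliation is used as a black box. This is also where the distinction between the torus ($n = 0$) and the genuine nilmanifolds ($n \neq 0$) matters: on $\mathbb{T}^3$ an absolutely partially hyperbolic diffeomorphism may have noncompact center leaves, for instance one homotopic to an Anosov automorphism, so the statement is to be read for the non-toral nilmanifolds, precisely the range in which Hammerlindl's compactness result applies. Once this is flagged, the remaining verifications are bookkeeping of definitions, and the conclusion is obtained by invoking Theorem \ref{dichotomy}.
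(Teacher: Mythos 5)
Your proposal is correct and follows essentially the same route as the paper, which derives the corollary from exactly the same three ingredients: dynamical coherence from \cite{brin-burago-ivanov, parwani}, Hammerlindl's announced compactness of center curves for absolutely partially hyperbolic diffeomorphisms in the case $n\neq 0$ \cite{hammerlindl1}, and accessibility from \cite{hhu-nilman}, after which Theorem \ref{dichotomy} applies directly. Your added caveat that the statement must be read for the non-toral nilmanifolds ($n\neq 0$), since on $\mathbb{T}^3$ center leaves need not be compact, is a sound clarification consistent with the paper's own framing of Hammerlindl's result.
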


\section{Existence and structure of entropy maximizing measures}\label{existence}

Take any invariant measure $\mu$ for $f$ and let $\nu = \mu\circ \pi^{-1}$ (observe that $\nu$ is given by Rokhlin  disintegration). By Ledrappier-Walters variational principle \cite{Francoi-Walters}
$$
 \sup_{\hat{\mu} :\hat{ \mu} \circ \pi^{-1} = \nu } h_{\hat{\mu}}(f) = h_{\nu} (f^\ast) +
 \int_{M^\ast} h(f, \pi^{-1}(y)) d\nu(y).
$$
 Since $\pi^{-1} (y),\, y \in M^\ast$, is a circle and its iterates have bounded length we have that  $h(f, \pi^{-1}(y)) =0,$  that is,  fibers does not contribute to the entropy.  Hence, by the above equality and the well-known fact that $h_{\mu} (f) \geq h_{\nu}(g)$ we conclude that $h_{\mu}(f) = h_{\nu} (g).$  Using the usual variational principle this implies that the topological entropy of $f$ and $f^\ast$ coincide.
 In particular,
 the set of entropy maximizing measures of $f$ coincides with the subset of ergodic measures which projects down to an entropy maximizing measure for $f^\ast.$

In general the existence of entropy maximizing measures is a nontrivial question.
However, by the comments above, in the setting of Theorem \ref{dichotomy}, it is easy to see that entropy maximizing measures always exist. Indeed let $\mathcal{M}$ be the set of probability measures on $M$ and consider
$$ \mathcal{M}_0 := \{ \mu \in \mathcal{M} \colon \pi_*(\mu) = m \} $$ where $m$ is the entropy maximizing measure of $f^\ast$.

It is clear that $\mathcal{M}_0$ is a nonempty convex compact subset of $\mathcal{M}$ and the operator $f_* : \mathcal{M}_0 \rightarrow \mathcal{M}_0$ has a fixed point.

  We finishes this section recalling some properties of entropy maximizing measures of the quotient dynamics. An invariant probability measure $m$ has local product structure if for every $x$ in the support of $m$ there exist measures $m^s, m^u$ on $B^s(x), B^u(x)$ such that $m|_{B(x)}$ is equivalent to $m^s \times m^u.$
Let $g$ be a linear Anosov diffeomorphism on $\mathbb{T}^2$. Then $g$ is topologically mixing and it is well-known that it admits a unique maximizing measure $m$ which is the Lebesgue measure. Let us remind that in general by Bowen and Margulis constructions of maximizing measures, any topologically mixing Anosov diffeomorphism admits a unique maximizing measure which has local product structure.

Now take any dynamics like $f^\ast$ (quotient to central foliation dynamics) which is topologically conjugate to a linear Anosov diffeomorphism. Let $h$ be the conjugacy $ f^\ast \circ  h = h \circ g.$ Then $h_*m$ is the unique maximizing measure of $f^\ast$. Moreover, as $h$ carries the stable (unstable) manifolds of $g$ to stable (unstable) sets of $f^\ast$ we easily conclude that $h_*m$ also has local product structure.

\section{Rigidity and zero Lyapunov exponents }\label{rigidity}

 Here we prove the first part of the theorem. Suppose that there exists an  entropy maximizing measure $\mu$ with $\lambda_c(\mu) =0 $ or, more general, there exist a sequence of entropy maximizing measures such that $\lambda_c(\mu_n) \rightarrow 0$. In both  cases we prove that $(f, \mu)$ is isomorphic to a rotation extension of an Anosov diffeomorphism $A.$
In the second case $\mu$ stands for a weak accumulation point of $\mu_n$ which will also have maximum entropy.

To prove the above claim we apply firstly the invariance principle of Avila-Viana \cite{AV} to conclude that $\mu$ admits a desintegration $[x \rightarrow \mu_x]$ which is $s$-invariant and $u$-invariant and $x \rightarrow \mu_x$ varies continuously with $x$ on $\supp(\pi_{*}(\mu)) = M^\ast$. We state Avila-Viana result adapted to our setting.

\begin{theorem}[Theorem D, \cite{AV}]
Let $f:M\rightarrow M$ be a partially hyperbolic diffeomorphism with one-dimensional compact center leaves. Assume that given $y\in W^\s(x)$ the naturally defined $\s$-holonomy between $W^c(y)$ and $W^c(x)$ is a homeomorphism for $\s=s,\,u$. Let
$(m_k)_k$ be a sequence of $f$-invariant probability measures whose projection
$\mu^\ast$
is  a probability measure that has local product structure. Assume the sequence
converges to some probability measure $\mu$ in the weak$^\ast$ topology
and
$\int |\lambda_c(x)|dm_k(x)\rightarrow 0$ when $k\rightarrow \infty$. Then, $\mu$ admits a disintegration
$\{\mu_{x^\ast}: \, x^\ast \in M^\ast\}$ which is $s$-invariant and $u$-invariant and whose conditional
probabilities $\mu_{x^\ast}$ vary continuously with $x^\ast$
on the support of $\mu^\ast$.
\end{theorem}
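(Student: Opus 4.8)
The statement is the invariance principle of \cite{AV} specialized to a one-dimensional compact center, and the plan is to recover it in the fibered language of the quotient dynamics rather than to invoke it as a black box. First I would regard $f$ as a nonlinear cocycle over the hyperbolic base $f^\ast$ on $M^\ast$, with fibers the center circles $\pi^{-1}(x^\ast)=W^c(x)$. For $y\in W^\sigma(x)$ with $\sigma\in\{s,u\}$, the hypothesis provides the holonomy homeomorphisms $h^\sigma_{x,y}\colon W^c(x)\to W^c(y)$, which are equivariant, $f\circ h^\sigma_{x,y}=h^\sigma_{f(x),f(y)}\circ f$. Call a disintegration $\{\mu_{x^\ast}\}$ a $u$-state (resp. an $s$-state) if $(h^u_{x,y})_\ast\mu_{x^\ast}=\mu_{y^\ast}$ for $\mu^\ast$-a.e. pair with $y\in W^u(x)$ (resp. the same with $u$ replaced by $s$). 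The conclusion to be proved is that the disintegration of the limit measure $\mu$ is simultaneously an $s$- and a $u$-state and admits a version that is continuous on $\supp(\mu^\ast)=M^\ast$.

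The core is a quantitative comparison between the center exponent and the failure of holonomy invariance. Fix $\sigma=u$ and a pair $y\in W^u(x)$. Under backward iteration the points $f^{-n}(x),f^{-n}(y)$ remain on a common unstable leaf and their distance contracts, so $h^u_{f^{-n}(x),f^{-n}(y)}$ converges $C^0$ to the identity. Meanwhile $f$-invariance of $m_k$ forces the equivariance $\mu_{f^\ast(x^\ast)}=(f|_{W^c(x)})_\ast\mu_{x^\ast}$ for its disintegration. Comparing, through the relation $h^u_{x,y}=f^n\circ h^u_{f^{-n}(x),f^{-n}(y)}\circ f^{-n}$, the two measures on the fiber over $y^\ast$ produced by holonomy transport and by the dynamics, one sees that the invariance defect, measured in a metric metrizing weak-$\ast$ convergence on the circle, is the image under $(f^n)_\ast$ of measures that are close on the contracting fibers; the only source of growth is the distortion of the center maps, of logarithmic size $\sum_{j<n}\log\|Df|_{E^c}\|$ along the orbit segment. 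The hypothesis $\int|\lambda_c|\,dm_k\to 0$ makes this accumulated distortion negligible, so the defect tends to $0$; passing to the weak-$\ast$ limit $m_k\to\mu$ yields an exact $u$-state. The symmetric argument, now contracting stable leaves under forward iteration, shows that $\mu$ is an $s$-state.

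For the continuity I would use that $\mu^\ast$ has local product structure together with the $s$- and $u$-invariance just obtained. Choosing by Lusin's theorem a point $x^\ast_0\in\supp(\mu^\ast)$ of approximate continuity, every nearby $y^\ast$ is joined to $x^\ast_0$ by a short $su$-path, and the composition of the corresponding stable and unstable holonomies carries $\mu_{x^\ast_0}$ to $\mu_{y^\ast}$ by $su$-invariance. Since the holonomies are homeomorphisms depending continuously on their endpoints, this prescription defines a continuous measure-valued map that agrees $\mu^\ast$-a.e. with the disintegration; as $M^\ast$ is connected and covered by such product neighborhoods, one obtains a globally continuous version, which is the assertion.

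The main obstacle is the estimate of the second paragraph. Two points require care. First, weak-$\ast$ convergence $m_k\to\mu$ does \emph{not} entail convergence of the conditional measures, so the invariance must be produced as a property stable under weak-$\ast$ limits rather than by passing to the limit in the disintegrations themselves. Second, the distortion bound must be uniform enough that the $L^1$-smallness of $|\lambda_c|$ translates into genuine vanishing of the defect, which is exactly where the one-dimensionality of the fiber and the absolute continuity of the holonomies enter. This is the technical heart of \cite{AV}, reproduced here in the present fibered setting.
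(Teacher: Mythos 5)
First, a point of comparison: the paper itself gives no proof of this statement --- it is imported verbatim (adapted to the setting) from Avila--Viana \cite{AV}, so your attempt can only be measured against the proof in \cite{AV}. Your framework is set up correctly: viewing $f$ as a nonlinear cocycle over $f^\ast$, formulating $u$- and $s$-states, and deriving continuity from the local product structure of $\mu^\ast$ by transporting conditionals along $su$-paths is indeed close in spirit to the ``bi-essentially invariant disintegrations admit a continuous version on the support'' step of \cite{AV}.

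The genuine gap is your second paragraph, which is the heart of the theorem, and the mechanism you propose there fails. Telescoping via $h^u_{x,y}=f^n\circ h^u_{f^{-n}(x),f^{-n}(y)}\circ f^{-n}$ reduces the invariance defect at time $0$ to the distance between $(f^n)_\ast$ of two measures on the fiber over $f^{-n}(y^\ast)$: the holonomy transport of $\mu_{f^{-n}(x)^\ast}$, and $\mu_{f^{-n}(y)^\ast}$ itself. The holonomy is indeed near the identity for large $n$, but $\mu_{f^{-n}(x)^\ast}$ and $\mu_{f^{-n}(y)^\ast}$ are conditionals over \emph{different} base points, and their distance is precisely the quantity one is trying to prove small --- a priori it is only bounded. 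So your estimate reads, schematically, $D_0\le \mathrm{Lip}\bigl(f^n|_{E^c}\bigr)\bigl(D_{-n}+o(1)\bigr)$ with $D_{-n}=O(1)$; and since the center exponent is (asymptotically) zero, $\mathrm{Lip}\bigl(f^n|_{E^c}\bigr)$ does not decay --- the $L^1$-smallness of $|\lambda_c|$ only gives subexponential control of the center distortion, and subexponential times $O(1)$ yields nothing. Moreover, the ``absolute continuity of the holonomies'' you invoke is not among the hypotheses (the $\sigma$-holonomies are mere homeomorphisms) and is not what \cite{AV} uses. The actual proof in \cite{AV} is entropy-theoretic, in the line of Ledrappier's invariance principle: nonpositive (resp.\ nonnegative) fibered exponents force the vanishing of a fibered entropy along unstable (resp.\ stable) partitions via a Jensen/martingale argument, and this vanishing is shown to be equivalent to the disintegration being a $u$-state (resp.\ $s$-state); the passage from the sequence $m_k$ with $\int|\lambda_c|\,dm_k\to 0$ to the limit $\mu$ then rests on semicontinuity of these entropy-type quantities and closedness of the set of $u$-states under weak$^\ast$ limits, not on a pointwise distortion bound. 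Your own caveat --- that invariance must be produced as a property stable under weak$^\ast$ limits rather than by passing to limits in the disintegrations --- correctly identifies the difficulty, but your sketch supplies no mechanism meeting it.
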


 By accessibility and holonomy invariance of $[x \rightarrow \mu_x]$ we conclude that\newline  $\supp(\mu_x) = W^c(x)$ and $\mu_x$ is atom free. Suppose that $\mathcal{W}^c$ is orientable and take and orientation for it. Now using a method as in Avila-Viana-Wilkinson  we get an $S^1-$action on $M$ with commutes with $f$, i.e. $\rho_{\theta}: M \rightarrow M$ such that $\rho_{\theta} \circ f = f \circ \rho_{\theta}, \theta \in S^1.$ We define $\rho$ in the following way: $\rho_\theta(x)=y$ where $y$ is the point in $W^c(x)$ such that the arc of center manifold joining $x$ with $y$ has conditional measure $\mu_x([x,y]_c)=\theta$ (we are identifying $\mathbb{S}^1 $ with $[0,1]$ mod. $1$) and we are measuring in the positive direction.

 \subsection{Rigidity} We will show that this action is conjugated to an action that is isometric on fibers. These will imply that $(f,\mu)$ is conjugated to an isometric (rotation) extension of a Bernoulli shift.

 Our next goal is to prove that $f$ is conjugated to an isometric extension of a hyperbolic automorphism of $\mathbb{T}^2$ if the center Lyapunov exponent of a entropy maximizing measure $\mu$ is 0.

 Recall that we used the invariance principle of Avila-Viana \cite{AV} to conclude that $\mu$ admits a disintegration $[x \mapsto \mu_x]$ which is $s$-invariant and $u$-invariant and $x \mapsto \mu_x$ varies continuously with $x$ on $\supp(\pi_{*}(\mu)) = M^\ast (=M/\mathcal{W}^c)$. Moreover, we have concluded that  $\supp(\mu_x)=W^c(x)$ and the measures $\mu_x$ are atom free. Now we want  to obtain the desired conjugacy. The main difficulty here is the absence of a global section of $\mathcal{W}^c$ (except in the case $M=\mathbb{T}^3$).

 \begin{proposition}\label{conjugacy}
 Let $\mathcal{W}$ be a 1-dimensional foliation by compact leaves of a closed manifold $M$ and $\rho:\mathbb{S}^1\times M\rightarrow M$ an (at least continuous) action such that
 \begin{enumerate}
 \item $\mathcal{W}$ has no singular leaves, that is, every leaf has a neighborhood where the foliation is a product.
 \item The leaves of $\mathcal{W}$ are the orbits of $\rho$.
 \item Given $x\in M$ the map $g\mapsto \rho(g,x)$ is a homeomorphism between $\mathbb{S}^1$ and $W^c(x)$.
 \end{enumerate}
 Then, $\rho$ is conjugated to an action $\overline \rho$ of $\mathbb{S}^1$ in a fiber bundle with fiber $\mathbb{S}^1$ and base $M/\mathcal{W}$ (the principal bundle associated to the fiber bunddle $\mathcal{W}$) Moreover, $\overline \rho$  acts by isometries in the fibers.
 \end{proposition}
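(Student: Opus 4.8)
The plan is to recognize the triple $(M,\mathcal{W},\rho)$ as a principal $\mathbb{S}^1$-bundle over $M/\mathcal{W}$. First I would record that conditions (2) and (3) make $\rho$ a \emph{free} action: since $g\mapsto\rho(g,x)$ is injective and $\rho(e,x)=x$, the stabilizer of every $x$ is trivial. Thus the orbits are exactly the leaves of $\mathcal{W}$, each orbit is a circle, and $\rho$ gives the quotient map $\pi:M\to M/\mathcal{W}$ the potential structure of a principal $\mathbb{S}^1$-bundle. Since $\mathcal{W}$ has compact leaves on a closed manifold, the base $M/\mathcal{W}$ is Hausdorff (Epstein), so the only thing missing is local triviality by \emph{equivariant} charts; once these are produced with $\mathbb{S}^1$-valued transition functions, the structure group acts by rotations, hence by isometries of the standard metric on $\mathbb{S}^1$, and the conclusion follows.

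The core construction builds equivariant charts from the action itself rather than from the product charts of (1), which need not be equivariant. Fix $x$ and use (1) to choose a closed transversal $\Sigma\cong\mathbb{D}^2$ (a plaque $\varphi_x^{-1}(\mathbb{D}^2\times\{*\})$ of a local product chart) meeting each nearby leaf exactly once. I would then define $\Phi:\Sigma\times\mathbb{S}^1\to M$ by $\Phi(\sigma,g)=\rho(g,\sigma)$. Injectivity follows from (3): if $\rho(g,\sigma)=\rho(g',\sigma')$ the two points lie on one leaf, so $\sigma=\sigma'$ by transversality of $\Sigma$, and then $g=g'$ by freeness. As a continuous bijection from the compact space $\Sigma\times\mathbb{S}^1$ onto its image in the Hausdorff manifold $M$, $\Phi$ is a homeomorphism; restricting to the interior and using (1) for the existence of small saturated neighborhoods, $\Phi$ restricts to a homeomorphism onto an open saturated set $V_x$. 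Composing $\Phi^{-1}$ with the identification $\Sigma\cong U_x\subset M/\mathcal{W}$ yields a chart $\psi_x:V_x\to U_x\times\mathbb{S}^1$ that is \emph{equivariant by construction}: $\psi_x(\rho(g,y))=(\pi(y),\,g\cdot\mathrm{pr}_2\psi_x(y))$.

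It remains to compute the transition maps. On an overlap, $\psi_\beta\circ\psi_\alpha^{-1}$ is a fiber-preserving homeomorphism of $(U_\alpha\cap U_\beta)\times\mathbb{S}^1$ commuting with every fiber rotation, since both charts are equivariant; and a self-homeomorphism of $\mathbb{S}^1$ commuting with all rotations is itself a rotation, so $\psi_\beta\circ\psi_\alpha^{-1}(u,g)=(u,\tau_{\beta\alpha}(u)\,g)$ for a continuous cocycle $\tau_{\beta\alpha}:U_\alpha\cap U_\beta\to\mathbb{S}^1$. Hence the $\psi_\alpha$ form an atlas with structure group $\mathbb{S}^1$ acting by left translations, exhibiting $M$ as the total space of a fiber bundle over $M/\mathcal{W}$ with fiber $\mathbb{S}^1$ (the associated principal bundle), on which $\rho$ is, in every chart, the standard rotation action $\overline\rho_g(u,h)=(u,gh)$. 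Endowing each fiber with the metric pulled back from the standard $\mathbb{S}^1$ — well defined precisely because the $\tau_{\beta\alpha}$ are rotations, hence isometries — makes $\overline\rho$ act by isometries on fibers, which is the assertion.

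The main obstacle is exactly the point emphasized before the statement: the absence of a global section forbids a single global trivialization $M\cong (M/\mathcal{W})\times\mathbb{S}^1$, so one cannot define the isometric fiber structure in one stroke. The resolution is that \emph{equivariance of the local charts is what forces the transition cocycle into $\mathbb{S}^1$}; this is the mechanism that globalizes the fiberwise isometric structure despite the nontrivial topology (already nontrivial for the nilmanifold bases appearing in Theorem~\ref{Misseifert}). The remaining verifications — that $\Sigma$ projects homeomorphically onto an open subset of the Hausdorff quotient, and that $\Phi$ is open — are routine applications of compactness and of the local product structure (1).
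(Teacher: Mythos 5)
Your proof is correct and follows essentially the same route as the paper's: both produce local trivializations that are equivariant for $\rho$ and observe that the resulting transition maps, commuting with all rotations, must themselves be rotations, so the structure group is $\mathbb{S}^1$ and the fiberwise round metric is well defined. You actually supply two details the paper only asserts as ``classical'' --- the explicit construction of the equivariant charts via $\Phi(\sigma,g)=\rho(g,\sigma)$ on a transversal, and the rotation lemma for the cocycle --- whereas the paper packages the same data as a clutching construction on $\bigsqcup_i V_i\times\mathbb{S}^1$; the two presentations are interchangeable.
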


 \begin{remark}
 The regularity of $\overline \rho$ depends on the regularity of $\mathcal{W}$ and $\rho$. In the case we are interested in both $\mathcal{W}$ and $\rho$ are H\"{o}lder continuous and then, we obtain the same regularity for $\overline \rho$.
 \end{remark}

  \begin{proof}
  We give an outline of the proof since the arguments are classical in fiber bundle theory (see, for instance, \cite{steenrod}). Let $p:M\rightarrow M/\mathcal{W}$ be the natural projection. Take a covering $\{V_i\}$ of $M/\mathcal{W}$ by balls such that the  $p^{-1}(V_i)$ are trivial. Let $\varphi_i:V_i\times \mathbb{S}^1\rightarrow p^{-1}(V_i)$ be coordinate functions chosen in such a way that $p\varphi_i=\pi$ (where $\pi $ is the projection on the first coordinate of $V_i\times \mathbb{S}^1$) and conjugate the restrictions of $\rho$ to $p^{-1}(V_i)$ with the actions $((x,\theta), \alpha)\mapsto (x,\theta)$ on $V_i\times \mathbb{S}^1$.

  Now, define the new bundle by taking the disjoint union $\bigsqcup_i V_i\times \mathbb{S}^1$ and the quotient by the following relation of equivalence: $(x,\theta)$ with $x\in V_i$ is equivalent to $(\overline x,\overline \theta)$ with $\overline x \in V_j$ if $\varphi_j\varphi_i^{-1}(x,\theta)=(\overline x, \overline \theta)$. This defines our new fiber bundle and the identity map of $\bigsqcup_i V_i\times \mathbb{S}^1$  induces a homeomorphism between $M$ and the new bundle that is not difficult to show that conjugates $\rho$ and $\overline\rho$.
  \end{proof}

 Since $f$ commutes with $\rho$ we obtain, using the conjugacy given by the proposition above, a homeomorphism $\overline f$ conjugated to $f$ that commutes with $\overline \rho$. This implies that $\overline f$ is an isometric extension of the dynamics $f^\ast$ of $f$ on $M/\mathcal{W}^c$. As we have shown before this quotient dynamics is conjugated to an Anosov dynamics of $\mathbb{T}^2$. Let us call $\overline \mu$ to the measure of $M$ formed by the entropy maximizing measure of $f^\ast$ in the base and such that  the  conditional measures along the fibers are Lebesgue. Then, $\overline \mu$ is a rotation extension of a Bernoulli measure.

 Accessibility of $f$ and Brin's theory on compact group extensions imply that $(\overline f, \overline \mu)$ is weakly mixing (see, for instance, \cite[Subsection 2.2]{dolgo}). Rudolph's result \cite{Rudolph} implies that
  $(\overline f, \overline \mu)$ (and then $(f,\mu)$) is Bernoulli.

 \subsection{Uniqueness} It only remains to show that $\mu$ is the unique entropy maximizing measure for $f$. Of course this is equivalent to show that $\overline \mu$ is the unique unique entropy maximizing measure for $\overline f$. Suppose that $\lambda$ is an entropy maximizing measure for $\overline f$. Then, as it was shown before  $\lambda$ projects onto the unique entropy maximizing measure of $f^\ast$ and accessibility implies that the conditional measures along center leaves are supported on the whole $\mathbb{S}^1$, atom free and invariant by rotations (Avila-Viana Invariance Principle again \cite{AV}) This yields that
 the conditional measures are Lebesgue and $\lambda=\overline\mu$.

 \begin{remark}
 In this paper we present a proof for $\dim(M)=3$.  The reader can easily verify that the same proof works for higher dimensions if one asks for the following conditions:
 \begin{enumerate}
 \item The center foliation is one-dimensional, by compact curves and without singular leaves.
 \item The quotient dynamics $f^\ast$ is transitive.
 \end{enumerate}

 Under these hypothesis $f^\ast$ is a transitive hyperbolic homeomorphism and then, it has a unique entropy maximizing measure for that is locally a product of measures of the stable and unstable manifolds (Bowen's measure \cite{bowen,margulis})

 In particular our result remain valid for accessible perturbations of a diffeomorphism of the form $g\times id|_{\mathbb{S}^1}$ where $g$ is a transitive Anosov diffeomorphism of a manifold of any dimension. Recall that accessibility is abundant in this setting (see \cite{bhhtu}).
 \end{remark}

\section{Second part of the dichotomy}\label{parte2}

Now we suppose that there exists $c_0>0$ that for all entropy maximizing measures $|\lambda_c(\mu)| > c_0$. If this were not the case we would have as sequence of entropy maximizing measures $(\mu_n)_n$ weakly$^\ast$ convergent to $\mu$ and such that $\lambda_c(\mu_n)\rightarrow 0$. The arguments of the previous section would imply that $f$ is conjugate to a rotation extension and has a unique measure with null center Lyapunov exponent.

\subsection{More than one measure} Our first step is to show that given an entropy maximizing measure  $\mu^+$ such that $\lambda_c(\mu^+) >0$ there exists a naturally associated measure $\mu^-$ with $\lambda_c(\mu)^-<0$.

\begin{lemma}\label{katomos}
There exists a set $S$ and $k\in \mathbb{N}$ such that $\mu^+(S)=1$ and for every $x\in S$, we have $\# S\cap \pi^{-1}(\pi(x))=k$.
\end{lemma}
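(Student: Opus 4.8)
The plan is to realize the set $S$ as the full-measure set of atoms of the center-conditional measures of $\mu^+$, and to read off $k$ from the fact that these atoms all carry equal mass. I organize the argument around two ingredients: first, that the disintegration $\{\mu^+_x\}$ of $\mu^+$ along the center leaves $W^c(x) = \pi^{-1}(\pi(x))$ is purely atomic; second, that the individual atoms all have the same weight, which forces their number per fibre to be a constant integer.

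For the first ingredient I would exploit the positivity of the center exponent together with the fact, established in Section \ref{existence}, that the fibres carry no entropy. Precisely, since $h_{\mu^+}(f) = h_m(f^\ast)$ with $m = \pi_\ast \mu^+$, the Abramov--Rokhlin relative entropy of $f$ over its factor $f^\ast$ vanishes. A Ledrappier--Young / Ruelle-type estimate in the one-dimensional center direction bounds this relative entropy below by $\lambda_c(\mu^+)$ times the pointwise dimension of the conditionals $\mu^+_x$ along $W^c(x)$; as $\lambda_c(\mu^+) \ge c_0 > 0$, the conditionals must have dimension zero, and by ergodicity this forces them to be purely atomic. Alternatively, one argues directly: positivity of the center exponent makes $f^{-n}$ contract center leaves, so backward iteration concentrates $\mu^+_x$ on atoms, this being the nonzero-exponent counterpart of the Avila--Viana invariance principle used in Section \ref{rigidity}.

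For the second ingredient I would define, for $\mu^+$-a.e. $x$, the weight $w(x) = \mu^+_{\pi(x)}(\{x\})$ of the atom at $x$. Equivariance of the disintegration, $f_\ast \mu^+_{y} = \mu^+_{f^\ast(y)}$ for $m$-a.e. $y$, sends the atom at $x$ to the atom at $f(x)$ preserving its mass, so $w \circ f = w$ almost everywhere. Since $\mu^+$ is ergodic, $w$ is constant $\mu^+$-a.e., say $w \equiv w_0 > 0$. On a $\mu^+$-typical fibre the atomic masses are then all equal to $w_0$ and sum to $1$, so each such fibre carries exactly $k := 1/w_0 \in \NN$ atoms. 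Taking $S$ to be the set of all atoms, $S = \{x : \mu^+_{\pi(x)}(\{x\}) = w_0\}$, we obtain $\mu^+(S) = 1$ and $\#\,(S \cap \pi^{-1}(\pi(x))) = k$ for every $x \in S$, as required.

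The main obstacle is the atomicity step: I must make the passage from vanishing relative entropy and positive center exponent to purely atomic conditionals rigorous in this non-uniform, foliated setting. The equal-weight and constancy arguments are then soft consequences of equivariance and ergodicity; the only point needing care there is the measurability of $x \mapsto w(x)$ and the essential uniqueness of the Rokhlin disintegration, both of which are standard.
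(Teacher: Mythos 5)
Your strategy is, in substance, the same as the paper's: the paper disposes of this lemma in one line by invoking Theorem II of Ruelle--Wilkinson \cite{ruelle-wilkinson}, checking only that $f$ is $C^{1+\alpha}$ and that the center manifolds vary continuously in the $C^{1+\alpha}$ topology, and what you propose --- atomicity of the center conditionals forced by the nonzero center exponent (via backward contraction of center leaves under $f^{-1}$), followed by equal weights $w_0$ and a constant fibre count $k=1/w_0$ from equivariance of the disintegration plus ergodicity --- is exactly the content and proof mechanism of that theorem; your equal-weight step also mirrors the remark the paper makes immediately after the lemma. So you have essentially reconstructed the black box the paper cites rather than found a different route.

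Two caveats keep this from being a complete proof as written. First, the step you yourself flag as the main obstacle, atomicity, is the entire nontrivial content of the citation: the backward-contraction sketch requires Pesin theory along the compact one-dimensional center leaves (a positive-measure set on which $f^{-n}$ uniformly contracts center plaques, combined with recurrence to conclude that the conditionals charge atoms), and this is precisely where the $C^{1+\alpha}$ hypotheses that the paper verifies enter; a $C^1$ version would not go through. Second, your entropy-based alternative has a soft spot: pointwise dimension zero of a conditional measure does not by itself imply that the measure is purely atomic, so you would need the stronger Ledrappier--Young-type statement that vanishing partial entropy along an expanding (here, center) direction forces the conditionals to be finitely many point masses; this can be made rigorous, but it is genuinely more machinery than the contraction argument. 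A minor bookkeeping point: define $S$ only over the full-$m$-measure set of fibres whose conditional is purely atomic with all atoms of weight exactly $w_0$; with $S$ taken as the set of all atoms, the exact count $\#\bigl(S\cap\pi^{-1}(\pi(x))\bigr)=k$ for \emph{every} $x\in S$ could fail on exceptional fibres.
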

\begin{proof}
The lemma  is a corollary of Theorem II of \cite{ruelle-wilkinson}. We observe that since $f\in C^{1+\alpha}$ and the center manifolds also vary continuously in the $C^{1+\alpha }$ norm we are in the conditions to apply the Ruelle-Wilkinson theorem.
\end{proof}

Lemma \ref{katomos} implies that conditional measures of almost every center manifold are supported in $k$ points and thus obtaining  that the weight of each one of these points is $\frac1k$ as a consequence of the ergodicity of $\mu^+$.

We can also suppose that $S$ is contained in the set of Pesin's regular points. Then, each point $x\in S$ has a two-dimensional Pesin unstable manifold $W_P^u(x)$ and we will denote $W_{\lambda_c}(x)$ the intersection of this unstable manifold with the center manifold of $x$, $W_P^u(x)\cap W^c(x)$ the Pesin center manifold of $x$. Since $W^c(x)$ is diffeomorphic to $\mathbb{S}^1$ we have that $W_{\lambda_c}(x)$ is an arc. Call $\tilde x$ to the extreme point of $W_{\lambda_c}(x)$ in the positive direction (recall that we have an orientation for $E^c$). Now we are in condition to define the measure $\mu^-$. We want $\mu^-$ to be an entropy maximizing measure so, it must project onto the same measure as $\mu^+$. Then, it will be enough to define the conditional measures of $\mu^-$ along the center manifolds. Then, given $x\in S$ we will assign weight $\frac1k$ to $\tilde x$.
This defines the conditional measures. The invariance of $\mu^+$ and unstable manifolds imply invariance of $\mu^-$ and ergodicity of $\mu^+$ implies ergodicity of $\mu^-$.

\begin{lemma}
$\lambda_c(\mu^-)<0$.
\end{lemma}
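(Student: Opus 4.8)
The plan is to argue by contradiction, ruling out $\lambda_c(\mu^-)\ge 0$ and then invoking the standing assumption of this section that no entropy maximizing measure has vanishing center exponent. First I would record the two facts that make the reduction clean: $\mu^-$ is ergodic and $\dim E^c=1$, so $\mu^-$-almost every point is Oseledets regular with center exponent equal to $\lambda_c(\mu^-)$; moreover, since $\mu^-$ is by construction the push-forward of $\mu^+$ under the map $x\mapsto \tilde x$, a full $\mu^-$-measure set of regular points corresponds to a full $\mu^+$-measure set of base points $x\in S$. Also $\mu^-$ is itself entropy maximizing, since it projects to the (unique) maximizing measure of $f^\ast$ and the fibers do not contribute to the entropy. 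Thus it suffices to prove $\lambda_c(\mu^-)\le 0$.

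Suppose, for contradiction, that $\lambda_c(\mu^-)>0$. Then for $\mu^+$-a.e.\ $x$ the point $\tilde x$ is Pesin regular for $\mu^-$ with \emph{positive} center exponent, hence carries a nontrivial local unstable manifold tangent to $E^u\oplus E^c$; its intersection with the center leaf $W^c(x)=W^c(\tilde x)$ is an arc $J$ that has $\tilde x$ in its \emph{interior}. In particular $J$ meets the negative side of $\tilde x$ inside $W^c(x)$, on which the backward orbit contracts, along the center leaf, to the backward orbit of $\tilde x$.

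The contradiction comes from comparing $J$ with the unstable center arc $W_{\lambda_c}(x)$. By construction $\tilde x$ is the positive endpoint of the open arc $W_{\lambda_c}(x)=W_P^u(x)\cap W^c(x)$, so any point $w\in W^c(x)$ lying sufficiently close to $\tilde x$ on the negative side belongs both to $W_{\lambda_c}(x)$ and, for $w$ close enough, to $J$. For such a $w$ the backward orbit then contracts, along the center leaf, to the backward orbit of $x$ (because $w\in W_{\lambda_c}(x)$) \emph{and} to that of $\tilde x$ (because $w\in J$). The triangle inequality forces the center-leaf distance between $f^{-n}x$ and $f^{-n}\tilde x$ to tend to $0$, i.e.\ $\tilde x\in W_P^u(x)\cap W^c(x)$ with exactly the same backward asymptotics as $x$. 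By maximality of connected components this places $\tilde x$ in the open arc $W_{\lambda_c}(x)$, contradicting that $\tilde x$ is a proper endpoint of that arc. Hence $\lambda_c(\mu^-)\le 0$, and since $|\lambda_c|>0$ on every entropy maximizing measure, we conclude $\lambda_c(\mu^-)<0$.

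The only delicate point is the simultaneous use of Oseledets regularity for $\mu^-$ and of the local Pesin unstable manifold at the points $\tilde x$; this is legitimate precisely because $\mu^-=(x\mapsto\tilde x)_\ast\mu^+$, so full measure for $\mu^-$ translates into a full $\mu^+$-measure set of base points, and because $f\in C^{1+\alpha}$ with $E^c$ varying $C^{1+\alpha}$, exactly as already invoked in the proof of Lemma \ref{katomos}. I expect this bookkeeping, rather than the geometric core, to be where care is needed; the geometric mechanism itself is simply that a point cannot be a terminal endpoint of one unstable center arc and an interior point of another.
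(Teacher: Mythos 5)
Your proposal is correct and takes essentially the same route as the paper: it rules out $\lambda_c(\mu^-)=0$ via the rigidity/uniqueness argument of Section~\ref{rigidity} (equivalently, this section's standing assumption that $|\lambda_c|>c_0$ on all entropy maximizing measures, which you legitimately apply after noting $\mu^-$ is maximizing), and rules out $\lambda_c(\mu^-)>0$ by the same geometric mechanism, namely that a positive exponent would give $\tilde x$ a Pesin unstable manifold meeting $W_P^u(x)$, contradicting that $\tilde x$ is an endpoint of $W_{\lambda_c}(x)$. Your backward-contraction bookkeeping simply spells out the ``unstable manifolds intersect, hence coincide'' step that the paper's one-line proof leaves implicit.
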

\begin{proof}
First of all observe that $\lambda_c(\mu^-)\neq 0$. Indeed, if it were not the case, the arguments of Section \ref{rigidity} would imply that $\mu^-$ is the unique entropy maximizing measure. Now, if $\lambda_c(\mu^-)>0$ the Pesin unstable manifolds of $\tilde x$ coincide intersects the Pesin unstable manifolds of $x$ contradicting that $\tilde x$ is in the boundary of $W_{\lambda_c}(x)$. This implies the lemma.
\end{proof}

\subsection{Finiteness} We have proved that we have more than one entropy maximizing measure if we have an entropy maximizing measure with nonzero center exponent. All these entropy maximizing measures are trivially shown to be finite extension of Bernoulli shifts (the entropy maximizing  measure of the quotient dynamics is equivalent to a Bernoulli shift).
Then, the only thing we have to prove to finish the proof of the second item of Theorem \ref{dichotomy} is the finiteness of the entropy maximizing measures.

\begin{lemma}\label{convdebil}
Suppose that there is a sequence of entropy maximizing measures $(\mu_n^+)_n$  with $\mu_i^+\neq \mu_j^+$ for $i\neq j$. Moreover, suppose that $\lambda_c(\mu_n^+)>0$ for all $n$ and that the sequence  converges in the weak$^\ast$ topology to a measure $\mu$.
Then, the sequence $(\mu^-_n)_n$ also converges to $\mu$.
\end{lemma}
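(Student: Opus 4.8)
The plan is to upgrade the assertion to a quantitative one: I would show that $\mu_n^+$ and $\mu_n^-$ become arbitrarily close in the $1$-Wasserstein (weak$^\ast$) distance, and then, since $\mu_n^+\to\mu$ by hypothesis, conclude that $\mu_n^-\to\mu$ as well. The starting observation is that, by the very construction of $\mu^-$, the measure $\mu_n^-$ is the push-forward of $\mu_n^+$ under the ``positive endpoint'' map $x\mapsto\tilde x$ along center leaves. Indeed both measures project to the same $m$ on $M^\ast$, and over $m$-almost every fibre the conditional of $\mu_n^+$ is $\frac{1}{k_n}\sum_i\delta_{x_i}$ on its $k_n$ atoms (Lemma \ref{katomos}), while that of $\mu_n^-$ is $\frac{1}{k_n}\sum_i\delta_{\tilde x_i}$. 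Matching $x_i$ with $\tilde x_i$ therefore defines a coupling of $\mu_n^+$ and $\mu_n^-$ whose transportation cost is at most $\frac{1}{k_n}\sum_i \ell\big(W_{\lambda_c}(x_i)\big)$, where $\ell$ denotes arc length along the center circle and $W_{\lambda_c}(x_i)=W_P^u(x_i)\cap W^c(x_i)$ is the Pesin center-unstable arc joining $x_i$ to $\tilde x_i$.

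The second step is to estimate this cost fibrewise. Over a fixed $y\in M^\ast$ the atoms $x_1,\dots,x_{k_n}$ lie on the compact center leaf $W^c$, and the arcs $W_{\lambda_c}(x_i)$ sit inside Pesin unstable leaves; since two such arcs lie in unstable leaves that are either disjoint or equal, they can only meet along shared endpoints, so they have pairwise disjoint interiors. Consequently their total length does not exceed the length of the center leaf. Using that the leaves of a foliation by compact curves of a closed $3$-manifold have uniformly bounded length (Epstein's result, as recalled in Section \ref{quotient}), say bounded by $C$, I obtain $\sum_i \ell\big(W_{\lambda_c}(x_i)\big)\le C$ on every fibre, and hence $\int_M \ell\big(W_{\lambda_c}(x)\big)\,d\mu_n^+(x)\le C/k_n$. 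Thus the transportation cost between $\mu_n^+$ and $\mu_n^-$ is at most $C/k_n$.

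It remains to prove that $k_n\to\infty$, and this is the step I expect to be the main obstacle. The intended route is to show that a sequence of \emph{distinct} ergodic maximizing measures cannot keep a bounded number of atoms per fibre: for each fixed $k$ there are only finitely many ergodic maximizing measures whose center conditionals consist of $k$ atoms of weight $1/k$. Such a measure is determined by its support $\supp(\mu)$, a compact $f$-invariant set that is a $k$-to-$1$ extension of the Bernoulli base $(f^\ast,m)$, and the stable and unstable holonomies permute the $k$ atoms over a fixed fibre, producing a transitive monodromy representation into the symmetric group $S_k$; there are only finitely many such combinatorial types, and each one pins down the measure. Hence along the sequence of pairwise distinct $\mu_n^+$ the numbers $k_n$ must be unbounded, and after noting that the whole argument applies to any subsequence we get $k_n\to\infty$. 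Combined with the bound $C/k_n$ on the transportation cost, this gives that the $1$-Wasserstein distance between $\mu_n^+$ and $\mu_n^-$ tends to $0$, and therefore $\mu_n^-\to\mu$. The delicate point to make rigorous is precisely the finiteness-for-fixed-$k$ claim, i.e.\ that the monodromy data genuinely classifies these measures and that distinctness of the $\mu_n^+$ indeed forbids a bounded $k_n$.
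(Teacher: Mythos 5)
Your coupling estimate in steps one and two is fine as far as it goes, but the argument stands or falls with the claim $k_n\to\infty$, and that is a genuine gap --- indeed the weakest possible place to put the weight. First, the finiteness-for-fixed-$k$ claim is essentially circular in context: Lemma \ref{convdebil} is precisely the engine of the paper's proof (Section \ref{parte2}) that there are only finitely many ergodic entropy maximizing measures, so invoking ``only finitely many ergodic maximizing measures with $k$ atoms per fiber'' assumes a strong form of the very conclusion the lemma is designed to establish; within the proof by contradiction nothing excludes an infinite sequence of distinct $\mu_n^+$ all with, say, $k_n=1$, and then your bound $C/k_n$ says nothing. Second, the sketch you offer for the classification does not work: for a measure with $\lambda_c(\mu)>0$ the invariance principle yields at best \emph{one-sided} holonomy invariance of the fiber conditionals, so the atoms are not permuted by both stable and unstable holonomies and there is no two-sided monodromy representation into $S_k$; and even granting a finite list of combinatorial types, nothing shows a type pins down the measure (the product of a hyperbolic automorphism with $\mathrm{id}_{\mathbb{S}^1}$ carries a continuum of ergodic maximizing measures, each with $k=1$, so any such rigidity would have to use $\lambda_c\neq 0$ in an essential, quantitative way). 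A smaller point: even your within-fiber disjointness needs an argument, since two atoms of the same measure could a priori lie on the \emph{same} Pesin unstable leaf, in which case their arcs coincide rather than meet at endpoints.

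The paper's proof sidesteps $k_n$ entirely by exploiting disjointness \emph{across} the measures rather than within one fiber conditional. If two points lie on the same Pesin unstable arc they are backward asymptotic, hence by the Birkhoff theorem generic for the same ergodic measure; since the $\mu_n^+$ are pairwise distinct (hence mutually singular) ergodic measures, on almost every center leaf the arcs $W_{\lambda_c}(x)$ attached to atoms of \emph{all} the measures $\mu_n^+$, $n\in\mathbb{N}$, have pairwise disjoint interiors. The uniform bound on the length of the compact center leaves then makes the total length over all $n$ summable on each leaf, so the arc lengths associated to the $n$-th measure tend to $0$ fiberwise; an Egorov-type selection gives that for every $\varepsilon>0$ and all large $n$ one has $\ell\bigl(W_{\lambda_c}(x)\bigr)<\varepsilon$ for the atoms over a set of fibers of quotient measure at least $1-\varepsilon$. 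Since $\tilde x$ is an endpoint of that arc, $\int\phi\,d\mu_n^+-\int\phi\,d\mu_n^-\to 0$ for every continuous $\phi$ --- your transportation estimate with $C/k_n$ replaced by a bound requiring no control on $k_n$ whatsoever. If you graft this cross-measure disjointness into your second step, your coupling argument goes through verbatim and the problematic third step disappears.
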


\begin{proof}
Since the $\mu_n^+$ are infinitely many different measures we have that for almost every center manifold the lengths of the Pesin center manifolds of the points that supports the corresponding conditional measures $\mu_n^+(x)$ go to 0. For this is important to observe that the Birkhoff Theorem implies that the support of these conditional measures do not intersect.  Then, we have that given $\varepsilon>0$ there exists an $N>0$ such that for $n>N$ the length of  the Pesin center manifold of $x$ is less than $\varepsilon $ for every $x$ in a set of center manifold of quotient measure greater than $1-\varepsilon$. This implies that for any continuous $\phi:M\rightarrow \mathbb{R}$ we have that  $\int\phi \;d\mu_n^+-\int\phi\; d\mu_n^-\rightarrow 0$. Then, we obtained that $\mu_n^-\rightarrow \mu$ if $\mu_n^+\rightarrow \mu$.
\end{proof}

 Now let us show that there exists just a finite number of ergodic entropy maximizing measures. That is the compact subset of measures that project onto the entropy maximizing measure of $f^\ast$ is a finite simplex.

 Suppose by contradiction that we have infinitely many entropy maximizing measures. As we have already shown the center Lyapunov exponents of these measures are nonzero. Then, we can take a infinite sequence having the center exponent with the same sign. Suppose that for this sequence the center exponent is positive (if not take the inverse). Since the set of invariant probabilities is sequentially compact we obtain a sequence of measures $(\mu_n^+)_n$ converging to a measure $\mu$ and satisfying  the hypothesis of Lemma \ref{convdebil}. Then, $(\mu_n^-)_n$ also converges to $\mu$.
 On the one hand, as we have observed at the beginning of this section $|\lambda_c(\mu_n^\s)|>c_0, \, \s=+,\,-,$ then, $$\int \log||Df|_{E^c}||\;d\mu=\lim \int \log||Df|_{E^c}||\;d\mu_n^+=\lim \lambda_c(\mu_n^+)\geq c_0.$$

 On the other hand, thanks to the same observation we have
 $$\int \log||Df|_{E^c}||\;d\mu=\lim \int \log||Df|_{E^c}||\;d\mu_n^-=\lim \lambda_c(\mu_n^-)\leq -c_0.$$

 These two inequalities obviously yield to contradiction finishing the proof of the second part of Theorem \ref{dichotomy}.

 \subsection{End of the proof} To finish the proof of Theorem \ref{dichotomy} we only need the following observations: \begin{enumerate}
\item Accessibility is an open and dense property for systems with one-dimensional center (see \cite{didier,bhhtu}).
\item The quotient dynamics is conjugated to a hyperbolic automorphisms so, the center foliation is plaque expansive. Hirsch-Pugh-Shub theory on normally hyperbolic foliations implies that dynamical coherence is an open property.
 \item If we are in the first case of the dichotomy, $f$ has no hyperbolic periodic points. It is easy to see that in this setting (there are periodic leaves diffeomorphic to $\mathbb{S}^1$) the diffeomorphisms having hyperbolic periodic points form an open and dense set.

 \end{enumerate}

\section{Proof of Theorem \ref{mixing} and Corollary \ref{answer-ny}}\label{section.mixing}

\begin{proof}[Proof of Theorem \ref{mixing}]

Let $A:\mathbb{T}^2\rightarrow \mathbb{T}^2$ be a hyperbolic automorphism and consider $F:=A\times \mathbb{S}^1:\mathbb{T}^2\times \mathbb{S}^1\rightarrow \mathbb{T}^2\times \mathbb{S}^1$. Bonatti and D\'{\i}az \cite{bonatti-diaz} have shown that $F$ can be approximated by robustly mixing diffeomorphisms. In other words there exists an open set of mixing partially hyperbolic diffeomorphisms $\mathcal{M}$ such that $F$ belongs to its closure.

The set of accessible diffeomorphisms $\mathcal{A}$ is open and dense in the set of partially hyperbolic diffeomorphisms with one dimensional center. Then, $\mathcal{N}=\mathcal{M}\cap \mathcal{A}\neq \emptyset$. The diffeomorphism of $\mathcal{N}$ satisfy the hypothesis of Theorem \ref{dichotomy} which implies that  the diffeomorphisms of an open end dense subset of $\mathcal{N}$ satisfy the second item of the dichotomy. This proves Theorem \ref{mixing}.

\end{proof}

\begin{proof}[Proof of Corollary \ref{answer-ny}]
If we neglect a set of null measure of fibers the projection $\pi$ induces a conjugation between the support of an entropy maximizing measure and a linear automorphisms of the torus times a finite permutation.  \end{proof}

\section{Questions and Remarks}\label{questions}

\subsection{Noncompact center leaves} As we mentioned in the introduction there are some advances in the case that the diffeomorphism is homotopic to Anosov (see \cite{bfsv, ures}). Ures' result uses that the invariant foliations are  quasi-isometric. Then, the following questions remain open:
\begin{question}
Let $f$ be a partially hyperbolic diffeomorphism of $\mathbb{T}^3$ homotopic to a hyperbolic automorphism. Is it always true that $f$ has a unique entropy maximizing measure? Or, presumingly equivalent in this case, is its topological entropy equal to the entropy of its linear part?
\end{question}

A similar question, in the sense of generalizing Theorem \ref{dichotomy}, remains open for the case that $f$ is homotopic to a hyperbolic diffeomorphism of $\mathbb{T}^2$ times the identity of $\mathbb{S}^1$ (or to a automorphism of a nilmanifold) in case that $f$ is dynamically incoherent.
Observe that the dynamically incoherent examples of \cite{example} are Axiom A and satisfy the conclusions of the second part of Theorem \ref{dichotomy}.

A different direction of future development is the case of perturbations of time one map of an Anosov flows. One of the difficulties in this case seems to be the fact that the entropy is not constant when the time varies. An interesting question would be the following:

\begin{question}
Is the topological entropy generically locally constant in a neighborhood of the time one map of an Anosov flow?
\end{question}

In \cite{hsx}, Y. Hua, R. Saghin and Z. Xia gave some topological conditions to obtain that the entropy is locally constant but these conditions are not valid in the neighborhood of the preceding question.

\subsection{Symplectic case}
There are many interesting problems in symplectic context (like $n$-body problem) where a partially hyperbolic invariant subset appears. By definition, a partially hyperbolic subset is a compact invariant subset with a partially hyperbolic decomposition of the tangent space satisfying similar conditions to the partially hyperbolic diffeomorphisms. As a simple and relevant model consider a symplectic manifold $N$ and $g : N \rightarrow N$ a symplectomorphism. Now take $f: M \rightarrow M$ any diffeomorphism with a transitive hyperbolic invariant subset $\Lambda \varsubsetneqq  M.$ Then $\Lambda \times N$ is partially hyperbolic subset for $ F:= f \times g$. By Hirsch-Pugh-Shub theory this subset has continuation after $C^1-$perturbations of $F.$ (See \cite{NP} for mixing properties of these kind of subsets.)
Let $N$ to be a surface with a symplectic form.  Then either both central Lyapunov exponents of $F$ (for any invariant measure) vanish or the sum of the central Lyapunov exponents is equal to zero.

 The invariant subsets under consideration have zero Lebesgue measure.
The class of entropy maximizing measures is  a natural object of study for the dynamics of such subsets. Tahzibi and Nassiri conjectured  that $F$ is in the closure of an open set $U$ such that any $G \in U$ has a partially hyperbolic set satisfying the following dichotomy: Either there exists an ergodic entropy maximizing measure with zero Lyapunov exponents (and in this case it is the unique maximizing measure) or there are finitely many (more than one) ergodic maximizing measures all of them with non zero Lyapunov exponents.

 One of the difficulties to deal with this conjecture is the lack of accessibility property in this context.   However some strong transitivity property for the holonomy (stable and unstable) semi group (\cite{TN}) is helpful to approach the problem.

\subsection{Number of maximizing measures in Theorem \ref{dichotomy}}

In general, we can obtain any even number of entropy maximizing measures just taking the product of an Anosov diffeomorphism  of $\mathbb{T}^2$ and a Morse-Smale diffeomorphism of $\mathbb{S}^1$ (an odd number of measures can be obtained if, for instance, we allow that $f$ does not preserve the center orientation). Then, the relevant questions are in case $f$ is topologically mixing or transitive.

\begin{question}
There are topologically mixing $f$ satisfying the hypothesis of Theorem \ref{dichotomy} and with more than 2 entropy maximizing measures? Is it true that generically $f$ has just 2 such measures?
\end{question}

\subsection{Conservative case}

According to \cite{AVW} if $f$ satisfies the hypothesis of Theorem \ref{dichotomy} and is conservative  (it preserves a smooth measure $m$) we have three possibilities (recall that accessibility implies $K$):

\begin{enumerate}
 \item $m$ has nonzero center exponents  and then, it is Bernoulli by well-known Pesin's results.
  \item The center exponent vanishes and the center foliation is absolutely continuous. It is proved that $f$ is smoothly conjugated to a rotation extension of a Bernoulli system and then, it is Bernoulli thanks to Rudolph's result \cite{Rudolph}.
  \item The center exponent vanishes but the center foliation is not absolutely continuous. The conditional measures along the center foliation are atomic and the projection of $m$ is not locally a product in the quotient dynamics.

\end{enumerate}

In the third case it is unknown if $m$ is Bernoulli. A negative answer would be very interesting because there are no example of a conservative $K$-diffeomorphism that is not Bernoulli in dimension 3.

\end{document}